\newtheorem{definition}{Definition}
\newtheorem{theorem}[definition]{Theorem}
\newtheorem{proposition}[definition]{Proposition}
\newtheorem{lemma}[definition]{Lemma}
\newtheorem{remark}[definition]{Remark}
\newtheorem{corollary}[definition]{Corollary}
\newtheorem{problem}{Problem}
\newtheorem*{claim*}{Claim}
\newcommand{\0}{\emptyset}
\newcommand{\mc}{\mathcal}
\newcommand{\mbb}{\mathbb}
\newcommand{\foralmostall}{\forall^\infty}
\newcommand{\existinfty}{\exists^\infty}
\newcommand{\Ee}{\mc{E}}
\newcommand{\Ii}{\mc{I}}
\newcommand{\Jj}{\mc{J}}
\newcommand{\Kk}{\mc{K}}
\newcommand{\Mm}{\mc{M}}
\newcommand{\Nn}{\mc{N}}
\newcommand{\Dd}{\mc{D}}
\newcommand{\Gg}{\mc{G}}
\newcommand{\Ss}{\mc{S}}
\newcommand{\bez}{\backslash}
\newcommand{\se}{\subseteq}
\newcommand{\sen}{\subsetneq}
\newcommand{\rest}{\hspace{-0.25em}\upharpoonright\hspace{-0.25em}}
\newcommand{\baire}{\omega^\omega}
\newcommand{\concat}{^{\frown}}
\newcommand{\cons}{^{\frown}}
\newcommand{\tn}[1]{\textnormal{#1}}
\newcommand{\cf}{\textnormal{cf}}
\newcommand{\add}{\tn{add}}
\newcommand{\cof}{\tn{cof}}
\newcommand{\non}{\tn{non}}
\newcommand{\cov}{\tn{cov}}
\newcommand{\dom}{\textnormal{dom}}
\newcommand{\Fin}{\textnormal{Fin}}
\def\ca{\mathcal{A}}
\def\cf{\mathcal{F}}
\def\ci{\mathcal{I}}
\def\b{\mathfrak{b}}
\def\c{\mathfrak{c}}
\def\d{\mathfrak{d}}
\def\w{\omega}
\def\baire{\w^\w}
\def\cof{\rm cof}
\def\cov{\rm cov}
\title{An Ideal Zoo in the Baire Space}
\author[Ł. Mazurkiewicz]{Łukasz Mazurkiewicz}
\email{lukasz.mazurkiewicz@pwr.edu.pl}
\author[M. Michalski]{Marcin Michalski}
\email{marcin.k.michalski@pwr.edu.pl}
\author[Sz. Żeberski]{Szymon Żeberski}
\email{szymon.zeberski@pwr.edu.pl}
\thanks{This work has been partially financed by grant {\bf 8211204601, MPK: 9130730000} from the Faculty of Pure and Applied Mathematics, Wrocław University of Science and Technology.
	\\
	AMS Classification: 03E05, 03E75, 03E17 
	\\
	Keywords: Cantor space, Baire space, null set, small set, meager set, chain, antichain, chain condition, ideal}
\address{Łukasz Mazurkiewicz, Marcin Michalski, Szymon Żeberski, Faculty of Pure and Applied Mathematics, Wrocław University of Science and Technology, 50-370 Wrocław, Poland}
\date{}
\begin{document}

\begin{abstract}
	In this paper, we study the translations into the Baire space of several well-known $\sigma$-ideals and families originally defined on the Cantor space, using their combinatorial characterizations. These include the ideals of null sets, small sets, those generated by closed measure-zero sets, and the meager sets, leading to their "fake" analogues in the Baire space. We also parametrize families related to null sets by functions from $\baire$. Several structural properties and relations between these families are investigated, including whether they form ideals, the existence of large chains and antichains, orthogonality, the $\kappa$-chain condition, and the determination of certain cardinal invariants.
\end{abstract}

\maketitle

\section{Introduction}

In \cite{MazMiRalZebAddBaire}, the authors considered the ideals $\Mm_-$ and $\Nn$ in the Baire space, designed to parallel the classical ideals of meager and measure-zero sets in the Cantor space. These were obtained by reformulating well-known characterizations in a new setting. Moving beyond the specific function $n \mapsto 2^n$, we will examine a generalization of $\Nn$, denoted $f\Nn(h)$ - the family of fake null sets parameterized by a suitable function $h \in \w^\w$. In addition, we will investigate related families associated with null sets: the small sets $\Ss$ with their parametrized analogues $f\Ss(h)$, and the closed measure zero sets $\Ee$ with $f\Ee(h)$. The study reveals a range of unexpected phenomena and subtle structural distinctions within these generalized frameworks.

\section{Notions and basic observations}

Throughout the paper, we work in ZFC and use standard set-theoretic notation (see, e.g., \cite{Jech}). We denote the set of natural numbers by $\omega$. For sets $X$ and $Y$, we write
    \[
        X^{<\w}=\bigcup_{n\in\w}X^n \quad \tn{ and } \quad Y^X=\{f\se X\times Y:\, f:X\to Y\}.
    \]
 The spaces $2^\w$ and $\w^\w$ will denote the Cantor space and the Baire space respectively. Both are equipped with standard topologies generated by basic clopen sets
    \[
        [\sigma]=\{x\in A^\w:\, \sigma\se x\},
    \]
    where $\sigma\in A^{<\w}$, $A\in\{2,\w\}$.

    We will consider several known families of sets, i.e. the $\sigma$-ideal $\Mm$ of meager subsets of $2^\w$ or $\w^\w$, the $\sigma$-ideal $\Nn$ of null subsets of $2^\w$, the family $\Ss$ of small subsets of $2^\w$ and the $\sigma$-ideal $\Ee$ of subsets of $2^\w$ $\sigma$-generated by closed null sets.

    Let $X$ be an uncountable Polish space and $\Ii\subseteq P(X)$ be a  $\sigma$-ideal. Let us recall some cardinal coefficients:
    \begin{align*}
        \add(\ci)&=\min \{ |\ca|:\; \ca\subseteq \ci\land \bigcup\ca\notin \ci \};
        \\
        \non(\ci)&=\min \{ |A|:\; A\subseteq X\land A\notin \ci\};
        \\
        \cov(\ci)&=\min \{ |\ca|:\; \ca\subseteq \ci\land \bigcup\ca = X\};
        \\
        \cof(\ci)&=\min \{ |\ca|:\; \ca\subseteq \ci\land (\forall A\in \ci)(\exists B\in\ca) (A\subseteq B)\};
        \\
        \b&=\min\{|\cf|: \cf\se\omega^\omega \land (\forall x\in\baire)(\exists f\in\cf)(\existinfty n)(x(n)<f(n))\};
        \\
        \d&=\min\{|\cf|: \cf\se\omega^\omega \land (\forall x\in\baire)(\exists f\in\cf)(\foralmostall n)(x(n)<f(n))\}.
    \end{align*}

The following definition is the rewriting of the characterization of the base of meager sets in the Cantor space (see \cite[Theorem 2.2.4]{BarJu}). It turned out that in the Baire space it forms a $\sigma$-ideal strictly smaller than $\Mm$ (see \cite[Theorem 4]{MazMiRalZebAddBaire}).

\begin{definition}
    $F\in\Mm_-$ if there are $x_F\in\baire$ and a partition of $\w$ into intervals $(I_n)_{n\in\w}$ such that
    $$F\se\left\{x\in\baire:\,(\foralmostall n)(x\rest I_n\ne x_F\rest I_n)\right\}.$$
\end{definition}

In \cite{MazMiRalZebAddBaire}, the authors introduced the notion of fake null sets. We will deal with its natural generalization with respect to any reasonable function from $\w^\w$.

\begin{definition}
    Let $h\in\baire$, $\limsup_n h(n)=\infty$. We will say that $F\in f\Nn(h)$ if
    $$\left(\forall \varepsilon>0\right)\left(\exists(\sigma_n)_{n\in\w}\right)\left(\sum_{n\in\w}\frac{1}{h(|\sigma_n|)}<\varepsilon\land F\se\bigcup_{n\in\w}[\sigma_n]\right).$$
\end{definition}

Notice that $f\Nn(h)$ is a $\sigma$-ideal with $\Gg_\delta$ base.

Let us highlight the fact that the characterization based on \cite[Lemma 2.5.1]{BarJu} and proved for the case of $h(n)=2^n$ in \cite[Lemma 15]{MazMiRalZebAddBaire} also works in the generalized setting.
\begin{lemma}
    Let $h\in\baire$, $\limsup_n h(n)=\infty$. The following are equivalent:
    \begin{enumerate}
        \item $F\in f\Nn(h)$.
        \item There is a sequence $(S_n)_{n\in\w}$, $S_n\se\w^n$, $\displaystyle \sum_{n\in\w}\frac{|S_n|}{h(n)}<\infty$, such that
        $$F\se\left\{x\in\baire:\,(\existinfty n)(x\rest n\in S_n)\right\}.$$
    \end{enumerate}
\end{lemma}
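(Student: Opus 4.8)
The plan is to prove the two implications separately, with essentially all the work concentrated in $(1)\IMP(2)$; the reverse implication is just a truncation of a convergent series. Throughout I adopt the convention $\tfrac1{0}=+\infty$, so that any string $\sigma$ whose length $n$ satisfies $h(n)=0$ automatically contributes $+\infty$ to a sum and can therefore never appear in a cover of finite weight; in particular all lengths actually used will have $h>0$, and no division by zero occurs in condition (2).

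For $(2)\IMP(1)$ I would start from a witnessing sequence $(S_n)_{n\in\w}$ with $C:=\sum_n |S_n|/h(n)<\infty$. Given $\eps>0$, convergence of this series lets me fix $N$ with $\sum_{n\ge N}|S_n|/h(n)<\eps$, and I take as my covering family a fixed enumeration $(\sigma_m)_m$ of all strings in $\bigcup_{n\ge N}S_n$. Regrouping the weight by length gives $\sum_m \tfrac1{h(|\sigma_m|)}=\sum_{n\ge N}\tfrac{|S_n|}{h(n)}<\eps$, and for any $x\in F$ the relation $(\existinfty n)(x\rest n\in S_n)$ yields some $n\ge N$ with $x\rest n\in S_n$, placing $x$ in the corresponding basic clopen set. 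Hence $F\se\bigcup_m[\sigma_m]$, which is exactly the cover demanded in the definition of $f\Nn(h)$.

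For the harder direction $(1)\IMP(2)$ I would apply the definition for each $\eps_k=2^{-k}$, obtaining sequences $(\sigma^k_m)_m$ with $\sum_m \tfrac1{h(|\sigma^k_m|)}<2^{-k}$ and $F\se\bigcup_m[\sigma^k_m]$, and then amalgamate everything by setting $S_n=\{\sigma^k_m:\,|\sigma^k_m|=n\}\se\w^n$. Counting with multiplicity, $|S_n|\le \#\{(k,m):|\sigma^k_m|=n\}$, so that $\sum_n \tfrac{|S_n|}{h(n)}\le\sum_{k,m}\tfrac1{h(|\sigma^k_m|)}=\sum_k\sum_m\tfrac1{h(|\sigma^k_m|)}<\sum_k 2^{-k}=2<\infty$, and the summability requirement of (2) is settled at once.

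The one genuine obstacle is the ``$\existinfty n$'' clause. Fixing $x\in F$, for each $k$ I can choose $\sigma^k_{m_k}\se x$, whence $x\rest n_k\in S_{n_k}$ for $n_k:=|\sigma^k_{m_k}|$; but a priori all of these initial segments could share one length, witnessing only finitely many levels. The observation that breaks this is that, since the \emph{whole} $k$-th sum is below $2^{-k}$, every individual term obeys $\tfrac1{h(|\sigma^k_m|)}<2^{-k}$, i.e. $h(n_k)>2^k$. Thus $h(n_k)\to\infty$, and since $h$ is bounded on every finite set of arguments (here the unboundedness of $h$, guaranteed by $\limsup_n h(n)=\infty$, is what makes such large-$h$ lengths available to be used at all), the set $\{n_k:k\in\w\}$ cannot be finite. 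Therefore there are infinitely many distinct $n$ with $x\rest n\in S_n$, giving $F\se\{x\in\baire:\,(\existinfty n)(x\rest n\in S_n)\}$ and completing the proof.
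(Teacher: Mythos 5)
Your proof is correct. The paper does not actually spell out an argument for this lemma (it only cites the case $h(n)=2^n$ from an earlier paper), but what you wrote is exactly the standard argument behind that citation: for $(1)\Rightarrow(2)$, amalgamate the $2^{-k}$-covers into level sets $S_n$, bound $\sum_n |S_n|/h(n)$ by the double sum $\sum_k\sum_m 1/h(|\sigma^k_m|)<2$, and — for the crucial ``infinitely often'' clause — observe that each term of the $k$-th cover satisfies $1/h(|\sigma^k_m|)<2^{-k}$, so the lengths witnessing $x\rest n\in S_n$ have $h$-values tending to infinity and therefore cannot all lie in a finite set; this is precisely the right generalization of the classical Cantor-space observation that strings in a cover of measure less than $2^{-k}$ must have length greater than $k$.
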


There is a notion related to classical null sets, so-called small sets (explored in e.g. \cite{BarSheSmall}). We generalize this notion too.
\begin{definition}
    Let $h\in\baire$, $\limsup_n h(n)=\infty$. We will say that $F\in f\Ss(h)$ if there is a partition of $\w$ into intervals $(I_n)_{n\in\w}$ and a sequence $(J_n)_{n\in\w}$, $J_n\se\w^{I_n}$, $\sum\frac{|J_n|}{h(|I_n|)}<\infty$ such that
    $$F\se\left\{x\in\baire:\,(\existinfty n)(x\rest I_n\in J_n)\right\}.$$
\end{definition}

In the Cantor space the family of small sets does not form an ideal \cite[Theorem 2.5.7, Theorem 2.5.11]{BarJu}. Hence, $f\Ss(n\mapsto 2^n)$ is not an ideal in the Baire space. Let us pose the following problem.
\begin{problem}
    For which functions $h\in\w^\w$ the family $f\Ss(h)$ is an ideal? A $\sigma$-ideal?
\end{problem}

Considering the characterization of $\Ee$ in the Cantor space that emerges from \cite[Lemma 2.6.3]{BarJu} (see also \cite[Lemma 18]{MiRalZebAddCant}) we may define its generalization in the form of fake $\Ee$ sets.
\begin{definition}\label{definition of fake E}
    Let $h\in\baire$, $\limsup_n h(n)=\infty$. We will say that $F\in f\Ee(h)$ if there is a partition of $\w$ into intervals $(I_n)_{n\in\w}$ and a sequence $(J_n)_{n\in\w}$, $J_n\se\w^{I_n}$, $\sum_{n\in\w}\frac{|J_n|}{h(|I_n|)}<\infty$, such that
    $$F\se\left\{x\in\baire:\,(\foralmostall n)(x\rest I_n\in J_n)\right\}.$$
\end{definition}
Clearly, $f\Ee(h)\se f\Ss(h)$.

In \cite[Lemma 19]{MiRalZebAddCant} nice characterization of $\Ee$ in the Cantor space was obtained. It turns out that if $h$ satisfies a certain condition, then the nice characterization works also in the Baire space.

\begin{proposition}
    Let $h\in\w^\w$ satisfy $h(a+b)\ge h(a)h(b)$ for any $a,b\in\w$ and $c\in(0,1)$. Then we can replace the condition $\sum_{n\in\w}\frac{|J_n|}{h(|I_n|)}<\infty$ in \Cref{definition of fake E} with $(\forall n)(\frac{|J_n|}{h(|I_n|)}<c)$.
\end{proposition}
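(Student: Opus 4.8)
The plan is to show that the two formulations of \Cref{definition of fake E} pick out the same family. Fix $c\in(0,1)$ and temporarily write $f\Ee_c(h)$ for the family obtained from \Cref{definition of fake E} by replacing the summability requirement with the uniform bound $(\forall n)(\frac{|J_n|}{h(|I_n|)}<c)$; the goal is the equality $f\Ee(h)=f\Ee_c(h)$, which I would prove by establishing both inclusions.

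The inclusion $f\Ee(h)\se f\Ee_c(h)$ is the easy direction and uses no hypothesis on $h$. If $F\in f\Ee(h)$ is witnessed by $(I_n)_{n\in\w}$ and $(J_n)_{n\in\w}$ with $\sum_n\frac{|J_n|}{h(|I_n|)}<\infty$, then $\frac{|J_n|}{h(|I_n|)}\to 0$, so only finitely many indices $n$ satisfy $\frac{|J_n|}{h(|I_n|)}\ge c$. Keeping the same partition, I would replace $J_n$ by $\0$ at each such index and leave the rest unchanged. The point is that $\{x\in\baire:(\foralmostall n)(x\rest I_n\in J_n)\}$ does not change when finitely many of the $J_n$ are altered, because the quantifier $\foralmostall$ disregards any finite set of indices; hence the modified data still cover $F$ while now meeting the uniform bound.

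The reverse inclusion $f\Ee_c(h)\se f\Ee(h)$ is the substantial direction, and it is where $h(a+b)\ge h(a)h(b)$ enters; note the uniform bound by itself gives nothing, as $\sum_n c=\infty$. Given a witness $(I_n)_{n\in\w}$, $(J_n)_{n\in\w}$ for $F\in f\Ee_c(h)$, the idea is to regroup the intervals into consecutive blocks of increasing length. I would fix block lengths $\ell_m$ with $\ell_m\to\infty$ and $\sum_m c^{\ell_m}<\infty$ (e.g. $\ell_m=m+1$), split the index set $\w$ into consecutive blocks $B_m$ of size $\ell_m$, and put $I'_m=\bigcup_{n\in B_m}I_n$ (again an interval) and $J'_m=\{\tau\in\w^{I'_m}:(\forall n\in B_m)(\tau\rest I_n\in J_n)\}$, so that $|J'_m|=\prod_{n\in B_m}|J_n|$. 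Since $\foralmostall$ is insensitive to such regrouping, one verifies $\{x:(\foralmostall n)(x\rest I_n\in J_n)\}=\{x:(\foralmostall m)(x\rest I'_m\in J'_m)\}$, so $F$ is still covered.

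The heart of the matter is summability of the regrouped witness. Iterating the hypothesis gives $h(\sum_{n\in B_m}|I_n|)\ge\prod_{n\in B_m}h(|I_n|)$, whence
$$\frac{|J'_m|}{h(|I'_m|)}=\frac{\prod_{n\in B_m}|J_n|}{h\left(\sum_{n\in B_m}|I_n|\right)}\le\prod_{n\in B_m}\frac{|J_n|}{h(|I_n|)}<c^{\ell_m},$$
and therefore $\sum_m\frac{|J'_m|}{h(|I'_m|)}<\sum_m c^{\ell_m}<\infty$, giving $F\in f\Ee(h)$. I expect this estimate to be the main obstacle: the per-interval bound $<c$ is not summable on its own, and it is precisely the superexponential growth forced by $h(a+b)\ge h(a)h(b)$ that lets a block of $\ell_m$ intervals contribute at most $c^{\ell_m}$, turning the uniform bound into genuine summability once the block lengths tend to infinity.
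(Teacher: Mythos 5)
Your proof is correct and takes essentially the same approach as the paper: the easy direction discards the finitely many indices violating the uniform bound (harmless since $\foralmostall$ ignores finite modifications), and the substantial direction regroups the intervals into consecutive blocks of length $m+1$ and uses the iterated supermultiplicativity $h(a+b)\ge h(a)h(b)$ to bound each block's contribution by $c^{\ell_m}$, which is precisely the paper's construction with $n_{k}=n_{k-1}+k$. The key estimate $\frac{|J'_m|}{h(|I'_m|)}\le\prod_{n\in B_m}\frac{|J_n|}{h(|I_n|)}<c^{\ell_m}$ is identical to the one in the paper's proof.
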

\begin{proof}
    If $\sum_{n\in \w}\frac{|J_n|}{h(|I_n|)}<\infty$ then for $0<c$ there is $N\in\w$ such that $\sum_{n>N}\frac{|J_n|}{h(|I_n|)}<c$. Hence, $\frac{|J_n|}{h(|I_n|)}<c$ for every $n>N$. We may set $J'_n=J_n$ for $n>N$ and $J'_n=\0$ for $n\le N$ to obtain the same basic $f\Ee(h)$ set.

    Conversely, assume that $(\forall n\in\w)(\frac{|J_n|}{h(|I_n|)}<c)$ for some $0<c<1$. Let $n_0=0$, $n_{k}=n_{k-1}+k$ for $k>0$ and set
    \begin{align*}
        I'_{k}&=\bigcup_{j=n_k}^{n_{k+1}-1}I_j,
        \\
        J'_k&=\{x_{n_k}\cup x_{n_k+1}\cup\ldots\cup x_{n_{k+1}-1}:\, (\forall n_k\le i < n_{k+1})(x_i\in J_i)\}.
    \end{align*}
    Then
    \[
        \frac{|J'_k|}{h(|I'_{k}|)}=\frac{\prod_{j=n_k}^{n_{k+1}-1} |J_j|}{h(\sum_{j=n_k}^{n_{k+1}-1}|I_j|)}\le \prod_{j=n_k}^{n_{k+1}-1} \frac{|J_j|}{h(|I_j|)}<c^k.
    \]
    Therefore $\sum_{n\in \w}\frac{|J'_n|}{|h(|I'_n|)|}<\infty$.
\end{proof}

In each of the above definitions we may replace the condition related to $h$ regulating the size of sets of sequences with finiteness of these sets.
\begin{definition}
    We will say that 
    \begin{itemize}
        \item $F\in f\Nn(\Fin)$ if there is $(S_n)_{n\in\w}$, $S_n\se\w^n$, $|S_n|<\w$ such that 
        \[
            F\se \{x\in\baire:\,(\existinfty n)(x\rest n\in S_n)\}.
        \]
        \item $F\in f\Ss(\Fin)$ if there is a partition of $\w$ into intervals $(I_n)_{n\in\w}$ and a sequence $(J_n)_{n\in\w}$, $J_n\se\w^{I_n}$, $|J_n|<\w$, such that
        \[
            F\se\left\{x\in\baire:\,(\existinfty n)(x\rest I_n\in J_n)\right\}.
        \]
        \item $F\in f\Ee(\Fin)$ if there is a partition of $\w$ into intervals $(I_n)_{n\in\w}$ and a sequence $(J_n)_{n\in\w}$, $J_n\se\w^{I_n}$, $|J_n|<\w$, such that
        \[
            F\se\left\{x\in\baire:\,(\foralmostall n)(x\rest I_n\in J_n)\right\}.
        \]
    \end{itemize}
\end{definition}

We may assume that in the case of $f\Ee(\Fin)$ and $f\Ss(\Fin)$ the intervals can be singletons. We have the following expected inclusions.

\begin{proposition}
    $f\Ee(\Fin)=\Kk_{\sigma}\se f\Nn(\Fin)$.
\end{proposition}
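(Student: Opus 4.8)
The plan is to prove both the equality and the inclusion through a single unifying observation: $\Kk_\sigma$, the $\sigma$-ideal generated by the compact subsets of $\baire$, is precisely the family of $\le^*$-dominated sets. That is, $A\se\baire$ belongs to $\Kk_\sigma$ if and only if there is $g\in\baire$ with $(\foralmostall n)(x(n)\le g(n))$ for every $x\in A$. First I would record this characterization. For the easy direction, if $A\se\bigcup_m K_m$ with each $K_m$ compact, then each $K_m$ is pointwise bounded by some $h_m\in\baire$ (the coordinate projections of a compact set are finite), and setting $g(n)=\max_{m\le n}h_m(n)$ forces $x\le^* g$ for every $x\in A$: if $x\in K_{m_0}$ then $g(n)\ge h_{m_0}(n)\ge x(n)$ for all $n\ge m_0$. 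Conversely, if every $x\in A$ satisfies $x\le^* g$, then splitting $A$ according to the finite initial segment on which $x$ exceeds $g$ writes $A$ as a countable union of pointwise bounded closed sets, each compact, so $A\in\Kk_\sigma$.

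Given this, the equality $f\Ee(\Fin)=\Kk_\sigma$ is immediate. By the remark preceding the statement I may take all intervals to be singletons, so a basic $f\Ee(\Fin)$ set has the form $\{x:(\foralmostall n)(x(n)\in J_n)\}$ with each $J_n\se\w$ finite; putting $g(n)=\max J_n$ shows every such set is $\le^*$-dominated, hence in $\Kk_\sigma$. Conversely, if $A\in\Kk_\sigma$ is dominated by $g$, the finite sets $J_n=\{0,1,\ldots,g(n)\}$ witness $A\in f\Ee(\Fin)$. Since both families are hereditary, the two containments give the equality.

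For $\Kk_\sigma\se f\Nn(\Fin)$ I would argue in two steps. First, a single compact $K\se\prod_n\{0,\ldots,g(n)\}$ lies in $f\Nn(\Fin)$: the finite sets $S_n=\{s\in\w^n:(\forall k<n)(s(k)\le g(k))\}$ satisfy $x\rest n\in S_n$ for every $x\in K$ and every $n$, hence for infinitely many $n$. Second, I would verify that $f\Nn(\Fin)$ is a $\sigma$-ideal. It is hereditary by definition, and it is closed under countable unions by a diagonal argument: given witnesses $(S_n^m)_n$ for sets $F_m$, the sets $S_n=\bigcup_{m\le n}S_n^m$ are finite, and for $x\in F_m$ the infinitely many $n$ with $x\rest n\in S_n^m$ all but finitely many satisfy $m\le n$, whence $x\rest n\in S_n$ infinitely often. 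Combining the two steps with the characterization above (writing $A\in\Kk_\sigma$ as a countable union of compact sets) yields the inclusion.

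The routine verifications—compactness as closedness plus pointwise boundedness, and the bookkeeping in the diagonal union—are standard. The one point deserving care is the mismatch in shape between the descriptions: $f\Nn(\Fin)$ constrains initial segments $x\rest n$ and demands infinitely many hits, whereas $\Kk_\sigma$ and $f\Ee(\Fin)$ constrain the tail values $x(n)$. This is exactly what the domination characterization reconciles, since pointwise boundedness of a compact set makes \emph{every} initial segment land in the prescribed finite box, while the $\sigma$-ideal closure of $f\Nn(\Fin)$ absorbs both the finitely many exceptional coordinates and the countable union over compact pieces. I expect this reconciliation, rather than any single computation, to be the crux of the argument.
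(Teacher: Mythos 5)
Your proof is correct. Note that the paper states this proposition without any proof, so there is no argument of the authors to compare against; your route through the $\le^*$-domination characterization of $\Kk_\sigma$ (compact subsets of $\baire$ are exactly the closed, pointwise bounded sets) is the standard and presumably intended one, and you correctly isolate the only delicate point, namely that a $\le^*$-dominated point can have finitely many ``bad'' coordinates, which is exactly what forces the detour through single compact boxes plus the $\sigma$-ideal property of $f\Nn(\Fin)$. Two small remarks: the $\sigma$-ideal property of $f\Nn(\Fin)$ is already established in the paper (proof of the proposition that $f\Nn(\Fin)\se f\Ss(\Fin)$ and both are $\sigma$-ideals), so you could simply cite it instead of redoing the diagonal union; and in the converse direction of your characterization the pieces of $A$ obtained by fixing the exceptional initial segment need not themselves be closed --- they are merely contained in compact boxes --- but since $\Kk_\sigma$ is the $\sigma$-ideal generated by compact sets and is hereditary, this costs nothing.
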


We have also unexpected ones - in the Cantor space it is the other way around.
\begin{proposition}
    $f\Nn(\Fin)\se f\Ss(\Fin)$ and both are $\sigma$-ideals. 
\end{proposition}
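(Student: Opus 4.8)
The plan is to treat the three assertions separately, reducing each to a statement about a single coordinate. The governing idea is that although the $f\Nn(\Fin)$ condition $x\rest n\in S_n$ refers to a whole initial segment, its occurrence is already detected by the last coordinate $x(n-1)$, and this is what links it to the interval-based family $f\Ss(\Fin)$.

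For the inclusion $f\Nn(\Fin)\se f\Ss(\Fin)$, suppose $F\se\{x\in\baire:\,(\existinfty n)(x\rest n\in S_n)\}$ with each $S_n\se\w^n$ finite. I would take singleton intervals $I_k=\{k\}$ and set $J_k=\{s(k):\,s\in S_{k+1}\}\se\w$, which is finite because $S_{k+1}$ is. Whenever $x\rest n\in S_n$ for some $n\ge1$ we get $x(n-1)\in J_{n-1}$, so any $x$ that meets the $S_n$ infinitely often also satisfies $(\existinfty k)(x\rest I_k\in J_k)$; thus $F$ is contained in a singleton-interval $f\Ss(\Fin)$ set. This deliberately sidesteps the apparent obstacle that $x\rest n\in S_n$ depends on the entire prefix and so cannot be read off a single block of a partition — one simply does not try to match the full segment, only its endpoint.

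That $f\Nn(\Fin)$ is a $\sigma$-ideal I would check by diagonalizing witnesses. Both families are downward closed by definition and contain $\emptyset$ (take all witnessing sets empty), and properness (that $\baire$ is not covered) follows since any singleton cover $\{x:(\existinfty k)(x(k)\in J_k)\}$ is avoided by a branch with $x(k)\notin J_k$, together with the inclusion just proved; so only closure under countable unions requires work. Given $F_m\se\{x:\,(\existinfty n)(x\rest n\in S^m_n)\}$, I would put $S_n=\bigcup_{m\le n}S^m_n$, a finite union of finite sets hence finite. For fixed $m_0$ and all $n\ge m_0$ one has $S^{m_0}_n\se S_n$, so every $x\in F_{m_0}$ still meets $S_n$ infinitely often, whence $\bigcup_m F_m$ is covered by $(S_n)_n$.

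The delicate point, which I expect to be the main obstacle, is that $f\Ss(\Fin)$ is a $\sigma$-ideal, since the $F_m$ arrive with different interval partitions that cannot be merged naively — this is precisely the phenomenon that blocks the small sets from forming an ideal in the Cantor space. The resolution is the reduction to singleton intervals available for $f\Ss(\Fin)$: from arbitrary intervals $I_n$ and finite $J_n\se\w^{I_n}$ one sets $J'_j=\{s(j):\,s\in J_n\}$ for $j\in I_n$, and $x\rest I_n\in J_n$ forces $x(j)\in J'_j$ for every $j\in I_n$, so each $F_m$ lies in a set of the common form $\{x:\,(\existinfty k)(x(k)\in J^m_k)\}$ with $J^m_k\se\w$ finite. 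Once all witnesses share the singleton shape, the same diagonalization as before — $J_k=\bigcup_{m\le k}J^m_k$, again finite — yields $\bigcup_m F_m\se\{x:\,(\existinfty k)(x(k)\in J_k)\}$, an $f\Ss(\Fin)$ set, completing the argument.
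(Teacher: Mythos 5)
Your proof is correct and follows essentially the same route as the paper: the inclusion via singleton intervals $I_k=\{k\}$ with $J_k=\{s(k):s\in S_{k+1}\}$, and $\sigma$-additivity via the reduction of $f\Ss(\Fin)$ witnesses to singleton intervals followed by the diagonal union $\bigcup_{m\le k}J^m_k$ (the paper states this reduction as a remark just before the proposition and uses it implicitly). Your extra verifications of downward closure and properness are fine but routine.
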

\begin{proof}
    Let $F\in f\Nn(\Fin)$. Without loss of generality let 
    \[
        F=\{x\in\w^\w:\, (\existinfty n\in\w)(x\rest n)\in S_n\},
    \]
    where $S_n\se \w^n$ are finite. Set $I_n=\{n\}$ and $J_n=\{(n, \sigma(n)):\, \sigma\in S_{n+1}\}$. Then
    \[
        F\se \{x\in\w^\w:\, (\existinfty n\in\w)(x\rest I_n\in J_n)\}\in f\Ss(\Fin).
    \]

    To see that $f\Ss(\Fin)$ is a $\sigma$-ideal, let $(F_k:\, k\in\w)$ be a sequence of sets from $f\Ss(\Fin)$. With each $F_k$ associate finite $J^k_n\se \w^{\{n\}}$ such that
    \[
        F_k\se \{x\in\w^\w:\,(\existinfty n\in\w)((n,x(n)) \in J^k_n)\}.
    \]
    Set $J_n=\bigcup_{k\le n}J^k_n$. Then 
    \[
        \bigcup_{k\in\w}F_k\se \{x\in\w^\w:\, (\existinfty n\in\w)((n,x(n))\in J_n)\}\in f\Ss(\Fin).
    \]
    The proof in the case of $f\Nn(\Fin)$ is analogous.
\end{proof}

\section{Individual analysis of the families}

\begin{proposition}\label{fake null ortogonalny do M}
    Let $h\in \w^\w$, $\limsup_{n}h(n)=\infty$. Then $f\Nn(h)\perp\Mm$.
\end{proposition}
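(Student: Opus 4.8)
Recall that two $\sigma$-ideals on a space are orthogonal precisely when the space can be written as the union of a member of one and a member of the other. So to prove $f\Nn(h)\perp\Mm$ the plan is to exhibit a single set $G\se\baire$ that lies in $f\Nn(h)$ and whose complement $\baire\bez G$ is meager; equivalently, a comeager member of $f\Nn(h)$. Since $f\Nn(h)$ is a $\sigma$-ideal with a $\Gg_\delta$ base, it suffices to produce a dense $\Gg_\delta$ set belonging to $f\Nn(h)$, and I would obtain it by a diagonal intersection.

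For each $k\in\w$ I would build a dense open set $G_k$ that admits a cover $\bigcup_n[\sigma^k_n]$ with $\sum_n\frac{1}{h(|\sigma^k_n|)}<2^{-k}$. Fix an enumeration $(\tau_i)_{i\in\w}$ of all of $\w^{<\w}$. Because $\limsup_n h(n)=\infty$, for every $i$ there is a length $m_i\ge|\tau_i|$ with $h(m_i)>2^{k+i+1}$; extend $\tau_i$ arbitrarily to a sequence $\sigma^k_i$ of length $m_i$ and set $G_k=\bigcup_i[\sigma^k_i]$. Each $[\sigma^k_i]\se[\tau_i]$, so $G_k$ meets every basic clopen set and is therefore dense open, while $\sum_i\frac{1}{h(|\sigma^k_i|)}=\sum_i\frac{1}{h(m_i)}<\sum_i 2^{-(k+i+1)}=2^{-k}$.

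Now put $G=\bigcap_k G_k$. As a countable intersection of dense open sets, $G$ is a dense $\Gg_\delta$, hence comeager, so $\baire\bez G\in\Mm$. To see $G\in f\Nn(h)$, fix $\eps>0$ and choose $k$ with $2^{-k}<\eps$: since $G\se G_k$, the cover $(\sigma^k_n)_n$ of $G_k$ also covers $G$ and witnesses $\sum_n\frac{1}{h(|\sigma^k_n|)}<2^{-k}<\eps$, which is exactly the defining condition for membership in $f\Nn(h)$. Thus $\baire=G\cup(\baire\bez G)$ splits the Baire space into an $f\Nn(h)$-set and a meager set, giving $f\Nn(h)\perp\Mm$.

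The only real content is the existence of the dense open sets $G_k$ of arbitrarily small $f\Nn(h)$-weight, and this is precisely where the hypothesis $\limsup_n h(n)=\infty$ is essential: it guarantees that no matter how long a stem $\tau_i$ must be extended, one can always reach a length $m_i$ at which $\frac{1}{h(m_i)}$ is as small as desired, keeping the total weight summable and below the prescribed threshold. Everything else—density, the $\Gg_\delta$ structure, and the extraction of the covering from $G\se G_k$—is routine.
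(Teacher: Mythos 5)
Your proof is correct and takes essentially the same approach as the paper: both arguments exhibit a comeager set belonging to $f\Nn(h)$ (so that its meager complement gives the orthogonal decomposition), built from basic clopen sets whose stems are chosen long enough that $h$ of their lengths is large, which is exactly where $\limsup_n h(n)=\infty$ enters. The only cosmetic difference is that the paper restricts an enumeration of the eventually-zero sequences to suitable lengths and verifies membership via the $\existinfty$-characterization, whereas you extend every finite sequence and check the covering definition directly.
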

\begin{proof}
    Fix an enumeration of rationals $\{q_n:\, n\in\w\}=\{q\in \w^\w:\, (\foralmostall n)(q(n)=0)\}$.
    Let $(k_n:\, n\in \w)$ be an increasing sequence of natural numbers such that $(h(k_n):\, n\in\w)$ is increasing and ${\sum_{n\in\w}\frac{1}{h(k_n)}<\infty}$. Set 
    \[
        F=\bigcap_{k}\bigcup_{n\ge k}[q_n \rest h(k_n)].
    \]
    Clearly, $F$ is comeager.
    \\
    Furthermore, notice that $F\in f\Nn(h)$. Indeed, let $S_{h(k_n)}=\{q_n\rest h(k_n)\}$ and $S_n=\0$ otherwise. Then $F=\{x\in\w^\w:\, (\existinfty n\in\w)(x\rest n\in S_n)\}\in f\Nn(h)$.
\end{proof}

    For a $\sigma$-ideal $\Ii$ we say that a set $A$ is $\Ii$-positive, if $A\notin \Ii$.
    \\
    Recall that $\Ii$ is $\kappa$-cc if for every family of Borel $\Ii$-positive sets $\{A_\alpha:\, \alpha<\kappa\}$ there are $\alpha\ne \beta$ such that $A_\alpha\cap A_\beta\notin \Ii$. For the moment, we will focus on the $\kappa$-cc property of our ideals.

    To begin, we state a useful lemma regarding $\Mm_-$.

\begin{lemma}\label{m minus pozytywny meager}
    For every meager set $A$, there exists a nowhere dense set $B\notin \Mm_-$ such that $B\cap A=\0$.
\end{lemma}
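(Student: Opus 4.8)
The plan is to pass through the \emph{tail} of the Baire space: I will exploit the fact that the first coordinate can be used as a ``free slot'' which simultaneously (i) spreads a positive meager set into a nowhere dense one and (ii) provides room to dodge~$A$. Throughout, recall that $B\notin\Mm_-$ means exactly that for every $x_F\in\baire$ and every interval partition $(I_n)_{n\in\w}$ there is a single $x\in B$ with $x\rest I_n=x_F\rest I_n$ for infinitely many $n$. As input I will use the strict inclusion $\Mm_-\sen\Mm$ from \cite{MazMiRalZebAddBaire}: since $\Mm_-$ is a downward closed $\sigma$-ideal, any meager $W\notin\Mm_-$ witnesses it via a decomposition $W=\bigcup_{k}N_k$ into nowhere dense (closed, increasing) sets, and not all $N_k$ can lie in $\Mm_-$; I keep the whole positive $W$ together with such a decomposition.

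The core device is a \emph{transfer across the first coordinate}. Identify $\baire$ with $\w\times Z$, where $Z=\w^{[1,\infty)}$ is the tail space, and fix a positive witness $W\subseteq Z$, $W\notin\Mm_-$, with $W=\bigcup_k N_k$ as above. Define
\[
    c(t)=\min\{k:\ t\in N_k\}\quad(t\in W),\qquad B_0=\{\,\langle c(t)\rangle\cons t:\ t\in W\,\}.
\]
Each fibre $c^{-1}(k)\cap W\se N_k$ is nowhere dense, so $B_0$ is nowhere dense (a point $x$ lies in $\overline{B_0}$ only if its tail lies in some $\overline{N_k}$ inside the slice $x(0)=k$, and each such slice has empty interior). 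For positivity, given $(x_F,(I_n))$, apply $W\notin\Mm_-$ to the tail datum $(x_F\rest[1,\infty),(I_n)_{n\ge 1})$ to obtain a \emph{single} $t\in W$ agreeing with $x_F$ on infinitely many of the blocks $I_n$ with $0\notin I_n$; the branch $\langle c(t)\rangle\cons t\in B_0$ then agrees with $x_F$ on those same infinitely many blocks. Hence $B_0\notin\Mm_-$. This already yields a nowhere dense set outside $\Mm_-$, isolating ``positivity'' entirely in the tail.

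It remains to arrange $B\cap A=\0$. Writing $A^t=\{k:\ \langle k\rangle\cons t\in A\}$ and $A_k=\{t:\ \langle k\rangle\cons t\in A\}$, each $A_k$ is meager in $Z$ (it is the clopen section of a meager set), so by Kuratowski--Ulam the set of tails with nonempty vertical fibre, and a fortiori the set $\{t:\ A^t\text{ is cofinite}\}=\bigcup_K\bigcap_{k\ge K}A_k$, is meager in $Z$. Thus for \emph{comeager} many tails $t$ there are infinitely many ``safe'' first coordinates $k$ with $\langle k\rangle\cons t\notin A$; for such $t$ one may replace $c(t)$ by any safe value $k\ge\min\{j:t\in N_j\}$, keeping the fibres nowhere dense and placing the branch off $A$. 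The only tails that resist are those in the meager set $M:=\{t:\ A^t\text{ cofinite}\}$, and here the transfer argument shows what is needed: if for some $(x_F,(I_n))$ \emph{every} tail matching $x_F$ infinitely often lay in $M$, then $W\setminus M\se\{t:\text{finitely many matches}\}\in\Mm_-$. Therefore it suffices to start from a witness with $W\setminus M\notin\Mm_-$, equivalently to produce a positive (meager) set inside the dense $G_\delta$ set $Z\setminus M$; intersecting $Z\setminus M$ further with the comeager complement of $A$ in each relevant slice then makes every chosen branch avoid $A$.

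The main obstacle is precisely this last reduction: manufacturing a positive meager witness \emph{inside a prescribed dense $G_\delta$}. I expect to handle it by \emph{relativising the construction of the positive witness of \cite{MazMiRalZebAddBaire} to the dense $G_\delta$}: carry out that construction as a fusion, at each step refining inside the current dense open set $U_m\supseteq Z\setminus(M\cup A_k)$ before meeting the positivity requirement at stage $m$. Since positivity, after the first-coordinate transfer, only constrains the tail and never forbids going deeper, and since a dense open set never obstructs deepening the fusion, the two demands are compatible; nowhere-density of the final $B$ is then secured, as above, by distributing the pieces over distinct first coordinates. Verifying that the relativised fusion still yields a set outside $\Mm_-$ (i.e.\ that steering into each $U_m$ does not destroy the infinitely-many-matches property) is the delicate point, and is where the bulk of the work will go.
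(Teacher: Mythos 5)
Your first-coordinate transfer is correct and genuinely nice: starting from a meager $W\notin\Mm_-$ in the tail space $Z$ and sending $t\mapsto\langle c(t)\rangle\cons t$ does convert a positive \emph{meager} set into a positive \emph{nowhere dense} set, and your slice-wise analysis of $A$ correctly shows that the whole problem reduces to producing a meager $\Mm_-$-positive set that avoids the derived meager set $M$ of ``bad'' tails. The gap is that this reduced problem is not an auxiliary technicality: it \emph{is} the lemma. ``For every meager $M$ there is a meager $W\notin\Mm_-$ with $W\cap M=\0$'' is exactly the statement being proved, up to your (correct) observation that nowhere density can be recovered afterwards by spreading fibres over first coordinates. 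You have reduced the lemma to itself, and the step you defer --- ``relativising the construction of the positive witness to the prescribed dense $G_\delta$'' --- is where all the content lies; you explicitly acknowledge it is unverified.

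Moreover, the fusion framing you propose for that step cannot work as described. The relation $B\notin\Mm_-$ is not a countable conjunction of requirements: it demands, for \emph{every} pair $(x_F,(I_n)_{n\in\w})$, of which there are continuum many, a single point of $B$ matching $x_F$ on infinitely many blocks, so positivity cannot be ``met at stage $m$'' of a countable fusion. What is needed instead --- and what the paper actually does --- is to define $B$ uniformly and then, given an \emph{arbitrary individual} witness $(x_F,(I_n))$, run a fresh diagonal construction of a point of $B$ escaping it. Concretely, the paper writes $A$ in the normal form $\{x:(\foralmostall\sigma)(\sigma\cons a(\sigma)\not\se x)\}$, takes $B$ to be the set of reals avoiding all patterns $\sigma\cons b(\sigma)$ (long blocks of zeros) minus $A$, and for each $F\in\Mm_-$ builds $x\in B\setminus F$ by alternately inserting $a(\xi_n)$ (to exit $A$), copying $x_F$ on one interval $I_{m_n}$ (to exit $F$), and inserting large values and ones (to keep clear of the zero patterns). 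Some such simultaneous three-way construction is unavoidable; your proposal cleanly separates the three demands but never carries out the step where they must be satisfied together.
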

\begin{proof}
    Without loss of generality we may assume that $A$ is according to \cite[Lemma 5]{MazMiRalZebAddBaire}, i.e. there is $a: \w^{<\w}\to \w^{<\w}$ such that
    \[
        A=\{x\in \w^\w:\, (\foralmostall \sigma\in\w^{<\w})(\sigma \cons a(\sigma)\not\se x)\}.
    \]
    Let $b: \w^{<\w} \to \w^{<\w}$ be such that $b(\sigma)=\mbb{0}\rest (\max\{|a(\sigma\rest k)|:\, k\in \w\}+\max\sigma+1)$. Set
    \[
        B=\{x\in \w^\w:\, (\forall \sigma\in\w^{<\w})(\sigma \cons b(\sigma)\not\se x)\}\bez A.
    \]
    Clearly $B$ is nowhere dense. To see that $B\notin \Mm_-$ let $F\in \Mm_-$ and let us check that $B\bez F\ne\0$. Without losing generality assume that
    \[
        F=\{x\in \w^\w:\, (\foralmostall n\in\w)(x\rest I_n\ne x_F\rest I_n)\}
    \]
    for some partition $\{I_n:\, n\in\w\}$ of $\w$ into intervals and a pattern $x_F\in \w^\w$. We will construct $x\in B\bez F$ inductively.
    \\
    Let $\xi_0=\0$. At step $n+1$ let
    \begin{align*}
        m_n&=\min\{m\in\w:\, \min I_m > |{\xi_n}\cons a(\xi_n)|+1\},
        \\
        i_n&=\max I_{m_n}+1
    \end{align*}
    and set $\xi_{n+1}=({\xi_n}\cons a(\xi_n)\cons {i_n} \cons \mbb{1}\rest (\min I_{m_n}-|{\xi_n}\cons a(\xi_n)|-1)\cup x_F\rest I_{m_n})\cons 1$. Let $x=\bigcup_{n\in\w}\xi_n$. Notice that $x\notin A\cup F$. We will show that $x\in B$. Let $k\in \w$. There is $n\in \w$ such that $k\in \dom (\xi_{n+1})\bez \dom (\xi_n)$. If $k\in \dom( {\xi_n}\cons a(\xi_n))$, then $|b(x\rest k)|\ge |a(\xi_n)|+1$. If $k\notin \dom( {\xi_n}\cons a(\xi_n))$, then $|b(x\rest k)|>i_n>|I_{m_n}|$. Either way, $(x\rest k) \cons b(x\rest k)\not\se x$.
\end{proof}

\begin{theorem}
    $\Mm_-$ is not $\add(\Mm)$-\tn{cc}.
\end{theorem}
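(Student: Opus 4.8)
The plan is to unwind the definition of the $\kcc$ property: to show that $\Mm_-$ is \emph{not} $\add(\Mm)$-\tn{cc} it suffices to exhibit a family $\{B_\alpha:\,\alpha<\add(\Mm)\}$ of Borel $\Mm_-$-positive sets such that $B_\alpha\cap B_\beta\in\Mm_-$ for all $\alpha\ne\beta$. In fact I would produce a \emph{pairwise disjoint} such family, since $\0\in\Mm_-$ makes the requirement on intersections automatic. The construction is a transfinite recursion of length $\add(\Mm)$ whose engine is \Cref{m minus pozytywny meager}.

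First I would recall that, directly from the definition of $\add$, the union of fewer than $\add(\Mm)$ meager sets is again meager. Now suppose that at stage $\alpha<\add(\Mm)$ the nowhere dense $\Mm_-$-positive sets $\{B_\beta:\,\beta<\alpha\}$ have already been chosen. Since each $B_\beta$ is nowhere dense, hence meager, and $|\alpha|<\add(\Mm)$, the set $A_\alpha=\bigcup_{\beta<\alpha}B_\beta$ is meager. Applying \Cref{m minus pozytywny meager} to $A_\alpha$ yields a nowhere dense set $B_\alpha\notin\Mm_-$ with $B_\alpha\cap A_\alpha=\0$; in particular $B_\alpha\cap B_\beta=\0$ for every $\beta<\alpha$. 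This keeps the recursion running through all $\alpha<\add(\Mm)$.

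The resulting family $\{B_\alpha:\,\alpha<\add(\Mm)\}$ then consists of pairwise disjoint nowhere dense $\Mm_-$-positive sets, and disjointness gives $B_\alpha\cap B_\beta=\0\in\Mm_-$, so no two members have $\Mm_-$-positive intersection; this witnesses the failure of the $\add(\Mm)$-\tn{cc}. The one bookkeeping point is that these witnesses are genuinely Borel, as the chain condition demands: the set $B$ delivered by \Cref{m minus pozytywny meager} is a closed set with a Borel meager set removed, hence Borel, so nothing extra is required. I do not expect a real obstacle, because the heavy lifting — producing a positive nowhere dense set off an arbitrary meager set — is precisely what \Cref{m minus pozytywny meager} already supplies; the only thing to watch is the cardinal arithmetic guaranteeing that $A_\alpha$ stays meager, i.e. that $\alpha<\add(\Mm)$ forces $|\alpha|<\add(\Mm)$ so that the partial unions never escape $\Mm$.
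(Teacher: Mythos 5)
Your proposal is correct and follows essentially the same route as the paper: a transfinite recursion of length $\add(\Mm)$ in which \Cref{m minus pozytywny meager} is applied at each stage to the (still meager, by $\add(\Mm)$) union of the previously constructed sets, producing a pairwise disjoint family of Borel $\Mm_-$-positive meager sets. The paper's proof is just a two-line version of this; your additional remarks on the cardinal arithmetic, the Borelness of the witnesses, and disjointness sufficing because $\0\in\Mm_-$ are exactly the details it leaves implicit.
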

\begin{proof}
    We will construct via induction a family $\{M_\alpha:\,\alpha<\add(\Mm)\}$ of pairwise disjoint meager $\Mm_-$-positive Borel sets.
    \\
    For $\beta<\add(\Mm)$ let $M_\beta\in \Mm\bez\Mm_-$ be obtained according to \Cref{m minus pozytywny meager} for ${A=\bigcup_{\alpha<\beta}M_{\alpha}}$.
\end{proof}
We may ask the following question.
\begin{problem}
    Is $\Mm_-$ not $\c$\tn{-cc}?
\end{problem}
To answer this question it would be productive to describe typical $\Mm_-$-positive Borel sets.

\begin{theorem}
    $f\Nn(\Fin)$ is not $\c$\tn{-cc}.
\end{theorem}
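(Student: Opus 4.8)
The plan is to refute the $\c$-cc directly, by producing a family of $\c$ many pairwise disjoint Borel $f\Nn(\Fin)$-positive sets: once the sets are pairwise disjoint, every pairwise intersection equals $\0\in f\Nn(\Fin)$, so no two of them can be positive on their intersection, which is exactly the negation of the $\c$-cc. The natural candidates are the slices obtained by freezing a branch on a coinfinite set of coordinates. Concretely, for $y\in\baire$ I would set
\[
    A_y=\{x\in\baire:\,(\forall k\in\w)(x(2k+1)=y(k))\}.
\]
Each $A_y$ is closed (hence Borel), distinct $y$'s give disjoint sets, and there are exactly $\c$ of them, so the only thing left to check is positivity.

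The heart of the argument is that each $A_y\notin f\Nn(\Fin)$. Suppose otherwise, so that $A_y\se\{x\in\baire:\,(\existinfty n)(x\rest n\in S_n)\}$ for some finite $S_n\se\w^n$. I would build a branch $x\in A_y$ that escapes every $S_n$: define $x$ by recursion on coordinates, always putting $x(2k+1)=y(k)$ (forced, to keep $x\in A_y$) while choosing each even value $x(2k)$ freely. Having fixed $x\rest 2k$, both $x\rest(2k+1)=(x\rest 2k)\cons x(2k)$ and $x\rest(2k+2)=(x\rest 2k)\cons x(2k)\cons y(k)$ are determined by the single value $x(2k)$; since $S_{2k+1}$ and $S_{2k+2}$ are finite, only finitely many values of $x(2k)$ would force $x\rest(2k+1)\in S_{2k+1}$ or $x\rest(2k+2)\in S_{2k+2}$, so I pick $x(2k)$ avoiding all of them. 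The resulting $x$ satisfies $x\rest n\notin S_n$ for every $n\ge 1$, hence lies outside $\{x:\,(\existinfty n)(x\rest n\in S_n)\}$ while still belonging to $A_y$, a contradiction. Thus every $A_y$ is positive, and $\{A_y:\,y\in\baire\}$ witnesses the failure of the $\c$-cc.

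The one genuinely load-bearing step is this positivity argument, and it is worth isolating why it works: at each level $n$ the guessing set $S_n$ is finite, while $\baire$ branches infinitely, so a single free coordinate can always sidestep the finitely many forbidden nodes sitting just above it. This is precisely the feature absent in $2^\w$, where finite branching prevents such dodging — consistent with the earlier remark that in the Cantor space the relevant inclusions (and the chain condition) run the other way. The remaining points are mere bookkeeping: that freezing the odd coordinates still leaves infinitely many free even coordinates interspersed, so that each level $n\ge 1$ is dodged by the even coordinate immediately preceding it, and that the $A_y$ are genuinely pairwise disjoint; both are immediate.
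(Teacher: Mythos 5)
Your proof is correct and follows essentially the same strategy as the paper's: both exhibit $\c$ many pairwise disjoint closed $f\Nn(\Fin)$-positive sets defined by coordinate-wise constraints (the paper fixes the parity of every coordinate via $f\in 2^\w$, you freeze the odd coordinates to $y\in\baire$), and both establish positivity by a diagonal evasion argument that exploits the infinite branching of $\baire$ to dodge the finitely many forbidden nodes in each $S_n$. The only cosmetic difference is that the paper's witness dodges by taking values of the correct parity that are larger than anything occurring in $S_n$, while yours uses the free even coordinates to sidestep the two adjacent levels each.
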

\begin{proof}
    For $f\in2^\w$ define
    $$A_f=\{x\in\w^\w:\,(\forall n\in\w)(2\mid x(n)+f(n))\},$$
    i.e. $f(n)$ decides whether $x(n)$ is even or odd for $x\in A_f$. Clearly $A_f\cap A_g=\emptyset$ for $f\ne g$. Suppose that $A_f\in f\Nn(\Fin)$ and let $(S_n)_{n\in\w}$ witness it. Define $y\in\w^\w$ by
    $$y(n)=2\cdot\max\{x(n):\,x\in S_n\}+2-f(n).$$
    $y\in A_f$, however $(\forall n\in\w)(y\rest n\not\in S_n)$.
\end{proof}

The next part of this section will be devoted to exploring cardinal invariants of $\Mm_-$. In order to do that, we will need the following definitions, introduced in \cite{NewelskiRoslanowski} and \cite{SpiPerfect}.
\begin{definition}
    For $f:\w^{<\w}\rightarrow\w$ let
    $$D_f=\{x\in\w^\w:\,(\foralmostall n)(x(n)\ne f(x\rest n))\}.$$
    Denote $\Dd_\w=\{A\se\w^\w:\,A\se D_f\tn{ for some f}\}$.
\end{definition}

\begin{definition}
    For $x\in\w^\w$ define $K_x=\{y\in\w^\w:\,(\foralmostall n)(y(n)\ne x(n))\}$ and denote by $\Ii_{\tn{ioe}}$ the $\sigma-$ideal generated by $K_x$.
\end{definition}

First, let us prove some basic (non-)inclusions between the $\sigma$-ideals.

\begin{proposition}
    $\Ii_{\tn{ioe}}\se\Mm_-$.
\end{proposition}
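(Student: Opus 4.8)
The plan is to exploit the fact, established in \cite{MazMiRalZebAddBaire}, that $\Mm_-$ is a $\sigma$-ideal. Since $\Ii_{\tn{ioe}}$ is by definition the $\sigma$-ideal \emph{generated} by the sets $K_x$, it suffices to show that each generator $K_x$ lies in $\Mm_-$; closure of $\Mm_-$ under countable unions and subsets then yields $\Ii_{\tn{ioe}}\se\Mm_-$ automatically.

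First I would observe that $K_x$ is already in the exact shape of the template defining $\Mm_-$, once the partition into singletons is used. Concretely, take $I_n=\{n\}$ for every $n$, which is a (degenerate) partition of $\w$ into intervals, and set the pattern $x_F=x$. Then $x\rest I_n$ encodes just the value $x(n)$, so the defining condition $(\foralmostall n)(y\rest I_n\ne x_F\rest I_n)$ reduces verbatim to $(\foralmostall n)(y(n)\ne x(n))$, which is precisely the membership condition for $K_x$. Hence $K_x\in\Mm_-$ for every $x\in\baire$.

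It then remains only to invoke the $\sigma$-ideal structure: any member of $\Ii_{\tn{ioe}}$ is contained in some countable union $\bigcup_{k\in\w}K_{x_k}$, and since each $K_{x_k}\in\Mm_-$ and $\Mm_-$ is closed under countable unions and under taking subsets, the whole union, and therefore the given set, belongs to $\Mm_-$. This closes the argument.

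I do not anticipate a genuine obstacle here: the entire content lies in recognizing that the ``eventually different'' pattern defining $K_x$ coincides with the singleton-interval instance of the $\Mm_-$ scheme. The only point meriting a moment's care is confirming that singletons are admissible as the intervals $I_n$ in the definition of $\Mm_-$, which they are, so no further work is needed.
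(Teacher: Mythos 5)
Your proof is correct and is essentially the paper's own argument: the paper likewise observes that $K_x=\{y\in\w^\w:\,(\foralmostall n)(x\rest\{n\}\ne y\rest\{n\})\}\in\Mm_-$ via the singleton-interval partition, leaving the appeal to the $\sigma$-ideal structure of $\Mm_-$ implicit. Your write-up merely spells out that final closure step, which the paper omits as routine.
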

\begin{proof}
    $K_x=\{y\in\w^\w:\,(\foralmostall n)(x\rest\{n\}\ne y\rest\{n\})\}\in\Mm_-$.
\end{proof}

\begin{proposition}
    $\Mm_-\not\se\Dd_\w$.
\end{proposition}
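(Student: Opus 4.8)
The plan is to exhibit a single set $A \in \Mm_-$ that is contained in no $D_f$; this suffices, since $\Dd_\w$ consists precisely of subsets of the sets $D_f$. The guiding observation is that $x \notin D_f$ means exactly $(\existinfty n)(x(n) = f(x\rest n))$, i.e.\ $f$ predicts $x$ correctly infinitely often. So I want $A$ to be small enough to lie in $\Mm_-$, yet rich enough that for \emph{every} predictor $f$ some element of $A$ is correctly predicted infinitely often.

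First I would fix the interval partition $I_n = \{2n, 2n+1\}$ and the pattern $x_F = \mbb{0}$, and set
$$A = \{x \in \w^\w :\, (\forall n)((x(2n), x(2n+1)) \ne (0,0))\}.$$
Every $x \in A$ differs from $\mbb{0}$ on each block $I_n$, hence on all but finitely many, so $A \se \{x \in \w^\w :\, (\foralmostall n)(x\rest I_n \ne \mbb{0}\rest I_n)\}$ and therefore $A \in \Mm_-$ directly from the definition.

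Next, fixing an arbitrary $f : \w^{<\w} \to \w$, I would construct $x \in A \bez D_f$ by recursion on blocks. Having defined $x\rest 2n$, set $x(2n) = f(x\rest 2n)$; then if $x(2n) = 0$ put $x(2n+1) = 1$, and otherwise put $x(2n+1) = 0$. By construction $x(2n) = f(x\rest 2n)$ for every $n$, so $f$ predicts $x$ at every even coordinate and hence $x \notin D_f$; and every pair $(x(2n), x(2n+1))$ is nonzero, so $x \in A$. Thus $A \not\se D_f$, and since $f$ was arbitrary, $A \notin \Dd_\w$, giving $\Mm_- \not\se \Dd_\w$.

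The only delicate point — the one the block structure is there to resolve — is reconciling the two demands on $x$. Blindly following $f$ to force predictions produces a $0$ exactly when $f$ predicts $0$, which threatens membership in $A$. Using length-$2$ blocks fixes this: the even coordinate is always spent on a guaranteed correct prediction, while the odd coordinate is kept in reserve to make the block nonzero precisely in the case where the even coordinate came out $0$. Every other step is immediate from the definitions.
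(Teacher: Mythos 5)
Your proof is correct and takes essentially the same approach as the paper: the paper uses $F=\{x\in\w^\w:\,(\forall n)(x(2n+1)\ne x(2n))\}$ (a subset of your $A$), certifies membership in $\Mm_-$ via the same blocks $\{2n,2n+1\}$ and pattern $\mbb{0}$, and defeats every predictor $f$ by following $f$ at even coordinates while using the odd coordinate of each block to stay inside the set. The only difference is the cosmetic choice of the blockwise constraint.
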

\begin{proof}
    Define $F=\{x\in\w^\w:\,(\forall n\in\w)(x(2n+1)\ne x(2n))\}$. $F\in\Mm_-$ since 
    $$F\se\{x\in\w^\w:\,(\forall n\in\w)(x\rest\{2n,2n+1\}\ne\mathbb{0}\rest\{2n,2n+1\})\}\in\Mm_-.$$
    However, $F\notin\Dd_\w$ because
    $$(\forall\sigma\in\w^{<\w})(2\mid |\sigma|\implies(\forall n\in\w)(\sigma\concat n\in F)).\qedhere$$
\end{proof}
\begin{corollary}
    Since $\Ii_{\tn{ioe}}\se\Dd_\w$, $\Ii_{\tn{ioe}}\sen\Mm_-$.
\end{corollary}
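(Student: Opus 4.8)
The plan is to reduce the strict inclusion to material already in hand: the previously proved $\Ii_{\tn{ioe}}\se\Mm_-$, and the set $F$ constructed in the preceding Proposition, which satisfies $F\in\Mm_-$ and $F\notin\Dd_\w$. The only missing ingredient is the inclusion $\Ii_{\tn{ioe}}\se\Dd_\w$ named in the statement. Granting it, $F\notin\Dd_\w$ forces $F\notin\Ii_{\tn{ioe}}$, so $F$ exhibits a member of $\Mm_-\bez\Ii_{\tn{ioe}}$; together with $\Ii_{\tn{ioe}}\se\Mm_-$ this is exactly $\Ii_{\tn{ioe}}\sen\Mm_-$. Hence the whole content of the corollary lies in verifying $\Ii_{\tn{ioe}}\se\Dd_\w$, which I would do in two steps: first that every generator $K_x$ belongs to $\Dd_\w$, and then that $\Dd_\w$ absorbs the countable unions of such generators.

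For the first step, fix $x\in\baire$ and set $f_x(\sigma)=x(|\sigma|)$ for $\sigma\in\w^{<\w}$. Then $D_{f_x}=\{y\in\baire:\,(\foralmostall n)(y(n)\ne x(|y\rest n|))\}=\{y\in\baire:\,(\foralmostall n)(y(n)\ne x(n))\}=K_x$, so $K_x=D_{f_x}\in\Dd_\w$. Since $\Ii_{\tn{ioe}}$ is the $\sigma$-ideal generated by the sets $K_x$, every member of $\Ii_{\tn{ioe}}$ is contained in some countable union $\bigcup_k K_{x_k}$, and it remains to place each such union inside a single $D_f$.

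This closure of $\Dd_\w$ under countable unions, i.e. that $\Dd_\w$ is a $\sigma$-ideal, is the nontrivial input and the step I expect to be the main obstacle; I would draw it from \cite{NewelskiRoslanowski,SpiPerfect}. The subtlety is that one cannot combine the $f_{x_k}$ by a naive interleaving, because a single coincidence $y(n)=f(y\rest n)$ cannot simultaneously be charged against two different predictors, so forcing every $f$-hitting $y$ to hit each $x_k$ infinitely often requires a genuine diagonalization: a bookkeeping predictor that, level by level, steers a would-be hitter through all of the constraints $x_0,\dots,x_k$ with $k$ growing along the prefix (for constant patterns this is already visible, e.g. by letting $f(\sigma)$ name a value of currently smallest multiplicity in $\sigma$, which forces a persistent hitter to realize each forbidden value infinitely often). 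Granting this, each $\bigcup_k K_{x_k}$ lies in some $D_f$, hence in $\Dd_\w$, so $\Ii_{\tn{ioe}}\se\Dd_\w$; assembling this with $F\in\Mm_-\bez\Dd_\w$ and $\Ii_{\tn{ioe}}\se\Mm_-$ completes the argument. The remaining verifications are routine and already recorded, so the proof is short once the $\sigma$-ideal property of $\Dd_\w$ is invoked.
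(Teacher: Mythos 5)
Your proposal is correct and takes essentially the same route as the paper: the corollary is immediate from the earlier proposition $\Ii_{\tn{ioe}}\se\Mm_-$, the set $F\in\Mm_-\bez\Dd_\w$ constructed in the preceding proposition, and the inclusion $\Ii_{\tn{ioe}}\se\Dd_\w$, which the paper simply takes as known (it is implicit in the cited Khomskii--Laguzzi results, e.g. $\add(\Dd_\w)=\w_1$ presupposes $\Dd_\w$ is a $\sigma$-ideal). Your additional verification that $K_x=D_{f_x}$ for $f_x(\sigma)=x(|\sigma|)$, together with the literature appeal for the $\sigma$-ideal property of $\Dd_\w$, just fills in the premise that the paper states without proof.
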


\begin{proposition}
    $\Dd_\w\not\se\Mm_-$
\end{proposition}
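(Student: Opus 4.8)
The plan is to exhibit a single function $f:\w^{<\w}\to\w$ for which $D_f\notin\Mm_-$; since trivially $D_f\in\Dd_\w$, this witnesses $\Dd_\w\not\se\Mm_-$. The naive choices of $f$ (constant, or depending only on the last coordinate, or on the index $n$) all produce sets that already lie in $\Mm_-$, because an adversary can tailor a block pattern $x_F$ forcing every element of $D_f$ to reveal the forbidden value inside almost every block. To defeat all such patterns at once I would take $f$ to be a bijection $c:\w^{<\w}\to\w$ (injectivity is all that is really used). The relevant consequence of injectivity is a \emph{spreading} property: for a fixed finite suffix $s$ and a fixed target value $v$, the equation $c(p\cons s)=v$ has at most one solution $p$, so among prefixes of a fixed length only finitely many are ``dangerous''.

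First I would fix an arbitrary partition $(I_n)_{n\in\w}$ of $\w$ into intervals and a pattern $x_F\in\w^\w$, and build $x\in D_c$ with $x\rest I_n=x_F\rest I_n$ for infinitely many $n$. This shows $D_c\not\se\{x:(\foralmostall n)(x\rest I_n\ne x_F\rest I_n)\}$, and since the partition and pattern are arbitrary, $D_c\notin\Mm_-$. The construction is a left-to-right recursion defining $x$ coordinate by coordinate: I designate infinitely many \emph{matched} blocks, separated by at least one free block, copy $x_F$ on the matched blocks, and choose the value freely on every other coordinate (always avoiding the single predicted value, which excludes only one option).

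The heart of the argument is making a matched block $I_n$ compatible with membership in $D_c$. Copying $x_F$ on $I_n$ imposes the finitely many constraints $x_F(m)\ne c(x\rest m)$ for $m\in I_n$, where $x\rest m=p\cons(x_F\rest[\min I_n,m))$ and $p=x\rest\min I_n$. By injectivity each constraint is violated by at most one prefix $p$, so at most $|I_n|$ prefixes are forbidden. Since the block immediately preceding $I_n$ is free, its last coordinate $x(\min I_n-1)$ may be chosen among infinitely many values; choosing it outside the finitely many last-coordinate values of the forbidden prefixes (and so as to respect the single avoidance constraint $x(\min I_n-1)\ne c(x\rest(\min I_n-1))$ at that coordinate) guarantees that the resulting $p$ differs from every forbidden prefix. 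Then $x(k)\ne c(x\rest k)$ at every coordinate $k$, so $x\in D_c$.

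I expect the main obstacle to be purely bookkeeping in the recursion: one must check that adjusting the single free coordinate preceding each matched block clears all $|I_n|$ constraints simultaneously, and that the free coordinates keep $x$ in $D_c$. Neither is hard once the injectivity spreading property is isolated, but the write-up must make visible that even pathological patterns — such as $x_F\equiv\mathbb{0}$ on long blocks, which defeat the naive choices of $f$ — are handled uniformly, precisely because a bijective $c$ almost never predicts the specific value $x_F(m)$ demanded on a block.
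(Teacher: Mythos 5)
Your proposal is correct and takes essentially the same route as the paper: the paper's proof also consists of taking $D_f$ for a bijection $f:\w^{<\w}\to\w$ and then running the block-by-block diagonal construction (copying $x_F$ on infinitely many blocks and using injectivity of $f$ to dodge the finitely many forbidden prefixes), which it outsources to the proof of \cite[Theorem 4]{MazMiRalZebAddBaire}. The only difference is that you supply in full the recursion details that the paper delegates to that citation, and your handling of the forbidden-prefix bookkeeping is sound.
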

\begin{proof}
    Take $D_f$ for a bijection $f:\w^{<\w}\rightarrow\w$ and follow the proof of \cite[Theorem 4]{MazMiRalZebAddBaire}.
\end{proof}

Connecting the above facts with the following result (which can be found e.g. in \cite[Theorem 2.3]{KhomLag}) we can calculate $\cov(\Mm_-)$ and $\non(\Mm_-)$.
\begin{theorem}[Y. Khomskii \& G. Laguzzi]\
    \begin{enumerate}
        \item $\cov(\Ii_{\tn{ioe}})=\cov(\Dd_\w)=\cov(\Mm)$ and $\non(\Ii_{\tn{ioe}})=\non(\Dd_\w)=\non(\Mm)$.
        \item $\add(\Ii_{\tn{ioe}})=\add(\Dd_\w)=\w_1$ and $\cof(\Ii_{\tn{ioe}})=\cof(\Dd_\w)=2^\w$.
    \end{enumerate}
\end{theorem}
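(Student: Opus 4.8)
The plan is to place both ideals in the chain $\Ii_{\tn{ioe}}\se\Dd_\w\se\Mm$ and to exploit that each is generated by a transparent family of ``avoiding'' sets. The first inclusion is immediate (for $f(\sigma)=x(|\sigma|)$ one has $D_f=K_x$), and the second holds since $D_f=\bigcup_N\{x:\,(\forall n\ge N)(x(n)\ne f(x\rest n))\}$ is a countable union of closed nowhere dense sets, hence meager. Since $\cov$ is antitone and $\non$ monotone with respect to inclusion, part (1) reduces to evaluating the endpoints $\cov(\Ii_{\tn{ioe}}),\non(\Ii_{\tn{ioe}})$ and squeezing $\Dd_\w$ in between.

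For the endpoints I would unwind the generated $\sigma$-ideal. A family $\{x_\alpha\}$ covers $\baire$ by the sets $K_{x_\alpha}$ precisely when every $z$ is eventually different from some $x_\alpha$, so
\[
    \cov(\Ii_{\tn{ioe}})=\min\{|X|:\,(\forall z)(\exists x\in X)(\foralmostall n)(x(n)\ne z(n))\},
\]
which is exactly Bartoszy\'nski's classical characterization of $\cov(\Mm)$. Dually, $Y\notin\Ii_{\tn{ioe}}$ means that for every countable $\{y_n\}$ some $z\in Y$ is infinitely often equal to each $y_n$; specializing to singletons yields Bartoszy\'nski's characterization of $\non(\Mm)$, whence $\non(\Ii_{\tn{ioe}})\ge\non(\Mm)$, while $\Ii_{\tn{ioe}}\se\Mm$ gives the reverse. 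Both endpoints thus equal the meager values, and $\cov(\Mm)\le\cov(\Dd_\w)\le\cov(\Ii_{\tn{ioe}})=\cov(\Mm)$ together with its $\non$-analogue completes (1).

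Part (2) rests on the rigidity statement that, writing $=^*$ for eventual equality, $K_x\se\bigcup_n K_{y_n}$ holds if and only if $x=^* y_n$ for some $n$. The nontrivial implication is a diagonalization: if $x$ is eventually equal to no $y_n$, then each $y_n$ disagrees with $x$ on infinitely many coordinates, so by dovetailing I can build $z$ with $z(m)\ne x(m)$ for all $m$ (hence $z\in K_x$) that matches each $y_n$ infinitely often (hence $z\notin K_{y_n}$), contradicting the inclusion. Granting this, pick $\w_1$ reals $\{x_\xi\}$ in pairwise distinct $=^*$-classes: any countable $\{y_n\}$ omits some class $[x_\xi]$, so $K_{x_\xi}\not\se\bigcup_n K_{y_n}$ and $\bigcup_\xi K_{x_\xi}\notin\Ii_{\tn{ioe}}$; with $\add\ge\w_1$ this gives $\add(\Ii_{\tn{ioe}})=\w_1$. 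Likewise, the lemma shows that no member of $\Ii_{\tn{ioe}}$ contains $K_x$ for more than countably many classes $[x]$, so covering the $2^\w$ generators coming from distinct classes forces $\cof(\Ii_{\tn{ioe}})\ge 2^\w$ (the reverse being immediate, as the $K_x$ form a basis of size $2^\w$).

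The main obstacle is the predictor analogue of the rigidity lemma needed to run the same additivity and cofinality bookkeeping for $\Dd_\w$. Here the role of $=^*$ is played by an equivalence comparing predictors $f,g\colon\w^{<\w}\to\w$, and the diagonalization is genuinely more delicate: to match $g_n$ infinitely often while staying in $D_f$ one must steer the branch $z$ through nodes $\sigma$ with $g_n(\sigma)\ne f(\sigma)$, turning the construction into a fusion that avoids the possibly dense set of nodes where $f$ and $g_n$ agree. Isolating the correct equivalence on predictors and proving that a single $D_g$ absorbs only countably many classes is the crux; once it is in place, the counting arguments delivering $\add(\Dd_\w)=\w_1$ and $\cof(\Dd_\w)=2^\w$ are verbatim those for $\Ii_{\tn{ioe}}$.
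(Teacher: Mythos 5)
First, a point of reference: the paper does not prove this statement at all --- it is quoted as a known theorem of Khomskii and Laguzzi --- so your attempt has to be judged on its own merits rather than against an in-paper argument. On those merits, part (1) and the $\Ii_{\tn{ioe}}$ half of part (2) are essentially right: a cover by $\Ii_{\tn{ioe}}$-sets refines to a cover by the generators $K_x$, so Bartoszy\'nski's characterizations do give $\cov(\Ii_{\tn{ioe}})=\cov(\Mm)$ and $\non(\Ii_{\tn{ioe}})=\non(\Mm)$; and your rigidity lemma ($K_x\se\bigcup_nK_{y_n}$ iff $x=^*y_n$ for some $n$, proved by dovetailing) together with counting $=^*$-classes correctly yields $\add(\Ii_{\tn{ioe}})=\w_1$ and $\cof(\Ii_{\tn{ioe}})=2^\w$. (One slip: the $K_x$ themselves are \emph{not} a basis of $\Ii_{\tn{ioe}}$ --- by your own lemma no single $K_y$ contains $\bigcup_nK_{y_n}$ for $y_n$ in distinct classes --- but countable unions of generators are a basis, so $\cof\le 2^\w$ survives.)

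The genuine gaps both concern $\Dd_\w$. (a) The inclusion $\Ii_{\tn{ioe}}\se\Dd_\w$ is not ``immediate'' from $K_x=D_f$: that only places the generators in $\Dd_\w$; you also need $\Dd_\w$ to be closed under countable unions, which is a real lemma --- and it is exactly what gives $\add(\Dd_\w)\ge\w_1$, without which the equality $\add(\Dd_\w)=\w_1$ is not even meaningful. It holds by a bookkeeping trick where the predictor's state advances only on hits: given $(f_n)_{n\in\w}$, let $c(\sigma)=|\{m<|\sigma|:\,\sigma(m)=g(\sigma\rest m)\}|$ (well defined by recursion on $|\sigma|$), let $(j_k)_{k\in\w}$ list $\w$ with every value repeated infinitely often, and set $g(\sigma)=f_{j_{c(\sigma)}}(\sigma)$; then any $x$ hitting $g$ infinitely often hits every $f_n$ infinitely often, so $\bigcup_nD_{f_n}\se D_g$. (b) You explicitly leave the crux of part (2) for $\Dd_\w$ unproved, and the ``fusion through dense agreement sets'' you anticipate is not the right picture; the correct argument makes the branch canonical rather than generic. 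If $K_x\se D_g$, define $z$ recursively by $z(m)=g(z\rest m)$ when $g(z\rest m)\ne x(m)$, and $z(m)=x(m)+1$ otherwise; then $z\in K_x\se D_g$, so $z$ has only finitely many hits, which by construction forces $g(z\rest m)=x(m)$ and $z(m)=x(m)+1$ for all $m\ge N$, some $N$. If now $K_{x_\alpha}\se D_g$ for uncountably many pairwise non-$=^*$-equivalent $x_\alpha$, pigeonhole the associated $z_\alpha$ on the countably many pairs $(N,z_\alpha\rest N)$: two of them share a pair, and the recursion $z(m)=g(z\rest m)+1$ for $m\ge N$ then forces $z_\alpha=z_\beta$, hence $x_\alpha=^*x_\beta$, a contradiction. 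So each $D_g$ absorbs only countably many classes, and your counting arguments for $\add(\Dd_\w)=\w_1$ and $\cof(\Dd_\w)=2^\w$ then go through verbatim. As written, however, the proposal proves only half of the theorem.
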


\begin{corollary}
    $\cov(\Mm_-)=\cov(\Mm)$ and $\non(\Mm_-)=\non(\Mm)$.
\end{corollary}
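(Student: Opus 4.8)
The plan is to sandwich $\Mm_-$ between two $\sigma$-ideals whose covering and uniformity numbers have already been pinned to those of $\Mm$, and then appeal to the standard monotonicity of these invariants under inclusion. First I would record the chain
\[
    \Ii_{\tn{ioe}} \se \Mm_- \se \Mm.
\]
The left inclusion is exactly the proposition proved above. The right inclusion follows from observing that every member of $\Mm_-$ is meager: for fixed $x_F$ and a partition $(I_n)_{n\in\w}$ of $\w$ into intervals, the set $\{x\in\baire:\,(\foralmostall n)(x\rest I_n\ne x_F\rest I_n)\}$ is the countable union over $N$ of the closed sets $\{x:\,(\forall n\ge N)(x\rest I_n\ne x_F\rest I_n)\}$, each of which is nowhere dense, so anything contained in such a set is meager.

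Next I would invoke the elementary monotonicity principle for these invariants: if $\Ii\se\Jj$ are $\sigma$-ideals on the same space, then every cover of the space by members of $\Ii$ is also a cover by members of $\Jj$, whence $\cov(\Jj)\le\cov(\Ii)$; dually, any set lying outside $\Jj$ lies outside $\Ii$, whence $\non(\Ii)\le\non(\Jj)$. Applying this to each inclusion in the chain yields
\[
    \cov(\Mm)\le\cov(\Mm_-)\le\cov(\Ii_{\tn{ioe}})
    \qquad\tn{and}\qquad
    \non(\Ii_{\tn{ioe}})\le\non(\Mm_-)\le\non(\Mm).
\]

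Finally, the Khomskii--Laguzzi theorem quoted above supplies $\cov(\Ii_{\tn{ioe}})=\cov(\Mm)$ and $\non(\Ii_{\tn{ioe}})=\non(\Mm)$, which collapses each chain of inequalities to a string of equalities and delivers both assertions at once. Since the genuine content---the positivity construction underlying $\Ii_{\tn{ioe}}\se\Mm_-$ and the computation of the invariants of $\Ii_{\tn{ioe}}$---is already established, there is no real obstacle to this argument; the only point deserving a moment's care is keeping the orientation of the monotonicity straight, namely that $\cov$ reverses inclusions while $\non$ preserves them, so that the two external ideals genuinely pinch the $\Mm_-$ values from opposite directions.
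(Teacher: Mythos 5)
Your proposal is correct and is essentially the paper's intended argument: the corollary is stated without proof precisely because it follows from the sandwich $\Ii_{\tn{ioe}}\se\Mm_-\se\Mm$, monotonicity of $\cov$ and $\non$ under inclusion, and the Khomskii--Laguzzi computation of $\cov(\Ii_{\tn{ioe}})$ and $\non(\Ii_{\tn{ioe}})$. The only difference is that you verify $\Mm_-\se\Mm$ directly (correctly, via the decomposition into closed nowhere dense sets), whereas the paper takes this inclusion from the cited earlier work where $\Mm_-$ was shown to be a $\sigma$-ideal strictly contained in $\Mm$.
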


Now let us proceed to cardinal invariants of the fake null ideal.

\begin{proposition}
    $\cof(f\Nn(\Fin))\leq\d$.
\end{proposition}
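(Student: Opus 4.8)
The plan is to produce a cofinal family of size $\d$ made up of particularly transparent basic $f\Nn(\Fin)$ sets indexed by a dominating family. The first observation is that every $F\in f\Nn(\Fin)$ is by definition contained in some set of the form $\{x\in\baire:\,(\existinfty n)(x\rest n\in S_n)\}$ with each $S_n\se\w^n$ finite; hence it suffices to find a family cofinal among these basic sets.

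For $g\in\baire$ I would put
$$A_g=\left\{x\in\baire:\,(\existinfty n)(\forall i<n)(x(i)<g(n))\right\}.$$
This is itself a basic $f\Nn(\Fin)$ set, witnessed by the finite sets $S_n=\{\sigma\in\w^n:\,(\forall i<n)(\sigma(i)<g(n))\}$, each of cardinality $g(n)^n$. I would then fix a dominating family $D\se\baire$ with $|D|=\d$ and claim that $\{A_g:\,g\in D\}$ is cofinal in $f\Nn(\Fin)$.

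To check cofinality, take a basic set $F\se\{x\in\baire:\,(\existinfty n)(x\rest n\in S_n)\}$ with every $S_n$ finite, and define $f\in\baire$ by letting $f(n)$ be one more than the largest value occurring as a coordinate of an element of $S_n$ (and $f(n)=1$ if $S_n=\0$). Then $S_n\se\{\sigma\in\w^n:\,(\forall i<n)(\sigma(i)<f(n))\}$ for every $n$. Choosing $g\in D$ eventually dominating $f$, say $f(n)<g(n)$ for all $n\ge N$, gives $S_n\se\{\sigma\in\w^n:\,(\forall i<n)(\sigma(i)<g(n))\}$ for these $n$.

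The only subtle point --- and the step I would flag as the main obstacle --- is reconciling the infinitely-often quantifier with merely eventual domination. If $x$ satisfies $(\existinfty n)(x\rest n\in S_n)$, then infinitely many witnessing indices exceed $N$, and each such index $n$ satisfies $(\forall i<n)(x(i)<g(n))$, so $x\in A_g$. This yields $F\se A_g$, establishing that $\{A_g:\,g\in D\}$ is cofinal and therefore $\cof(f\Nn(\Fin))\le|D|=\d$.
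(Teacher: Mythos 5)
Your proof is correct, and at the level of strategy it matches the paper's: index the candidate supersets by a dominating family, attach to each basic $f\Nn(\Fin)$ set the function recording the largest coordinate occurring in $S_n$, and invoke eventual domination. But you diverge in one detail, and the detail turns out to be essential. In your set $A_g$ the coordinates of a length-$n$ witness are bounded by $g(n)$, a bound depending on the \emph{length} $n$; in the paper's proof the sets $S^\alpha_n=\{\sigma\in\w^n:\,(\forall i<n)(\sigma(i)\leq f_\alpha(i))\}$ bound each coordinate $i$ by $f_\alpha(i)$, a bound depending on the \emph{coordinate}. With the coordinatewise bound, the set $F_\alpha=\{x\in\baire:\,(\existinfty n)(x\rest n\in S^\alpha_n)\}$ collapses to the compact box $\{x\in\baire:\,(\forall i)(x(i)\le f_\alpha(i))\}$, and no family of compact sets can be cofinal in $f\Nn(\Fin)$: for instance, the basic set determined by $S_n=\{\sigma\in\w^n:\,\sigma(0)\le n\,\land\,(\forall i)(0<i<n\implies\sigma(i)=0)\}$ contains the point $(m,0,0,\ldots)$ for every $m\in\w$ (witnessed by all $n\ge m$), so it is contained in no compact set and hence in no $F_\alpha$. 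Concretely, the paper's inference ``$t\le^* f_\alpha$, therefore $T_n\se S^\alpha_n$'' only gives $\sigma(i)\le f_\alpha(n)$ for $\sigma\in T_n$, $i<n$, not $\sigma(i)\le f_\alpha(i)$. Your length-indexed bound is exactly the repair (and presumably what the authors intended to write), and you also handle correctly the one remaining subtlety, namely that eventual domination suffices because infinitely many witnessing indices lie beyond the threshold $N$ --- a point the paper glosses over when it asserts the inclusion ``for every $n\in\w$''. So your argument should be viewed not merely as an alternative proof but as the corrected form of the paper's.
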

\begin{proof}
    Let $\{f_\alpha\in\baire:\,\alpha<\d\}$ be a dominating family, i.e. for every $f\in\baire$ there is $\alpha<\d$ such that $f\leq^*f_\alpha$. Define $S^\alpha_n=\{\sigma\in\w^n:\,(\forall i<n)(\sigma(i)\leq f_\alpha(i))\}$ for every $\alpha<\d$, $n\in\w$ and
    $$F_\alpha=\{x\in\baire:\, (\existinfty n\in\w)(x\rest n\in S^\alpha_n)\}\in f\Nn(\Fin).$$
    Suppose that $F\in f\Nn(\Fin)$, which is witnessed by a sequence $(T_n:\, n\in\w)$. Let $t(n)=\max\{\sigma(i):\,\sigma\in T_n\land i<n\}$. Then there is $\alpha<\d$ such that $t\leq^*f_\alpha$. Therefore $T_n\se S^\alpha_n$ for every $n\in\w$, so $F\se F_\alpha$.
\end{proof}
\begin{proposition}
    $\add(f\Nn(\Fin))\geq\b$.
\end{proposition}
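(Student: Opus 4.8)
The plan is to dualize the preceding argument for $\cof(f\Nn(\Fin))\le\d$: there a dominating family was turned into a cofinal family, and here I would use the dual fact that every family of size $<\b$ is $\leq^*$-bounded in order to compress fewer than $\b$ witnesses into a single one. Concretely, fix $\ca=\{F_\alpha:\,\alpha<\kappa\}\se f\Nn(\Fin)$ with $\kappa<\b$. The case of finite $\kappa$ is covered by $f\Nn(\Fin)$ being a $\sigma$-ideal, so assume $\kappa$ infinite. For each $\alpha$ fix a witness $(S^\alpha_n)_{n\in\w}$, $S^\alpha_n\se\w^n$ finite, with $F_\alpha\se\{x\in\baire:\,(\existinfty n)(x\rest n\in S^\alpha_n)\}$.

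Since each $S^\alpha_n$ is finite, I would associate with every $\alpha$ the function $b_\alpha\in\baire$ bounding all entries that occur at level $n$,
$$ b_\alpha(n)=1+\max\big(\{0\}\cup\{\sigma(i):\,\sigma\in S^\alpha_n,\ i<n\}\big). $$
From the displayed definition of $\b$, a family is unbounded (in the $\leq^*$ order) precisely when for every $x$ some member infinitely often strictly exceeds $x$; hence any family of size $<\b$ fails this and is $\leq^*$-bounded. Applying this to $\{b_\alpha:\,\alpha<\kappa\}$, whose size is $\le\kappa<\b$, yields a single $g\in\baire$ with $b_\alpha\leq^* g$ for all $\alpha<\kappa$.

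I would then set $S_n=\{\tau\in\w^n:\,(\forall i<n)(\tau(i)<g(n))\}$, which has size $g(n)^n<\w$, so $(S_n)$ is an admissible $f\Nn(\Fin)$-witness. To finish, take any $x\in F_\alpha$: there are infinitely many $n$ with $x\rest n\in S^\alpha_n$, and at each such $n$ every entry $x(i)$ with $i<n$ is strictly below $b_\alpha(n)$. As $b_\alpha(n)\le g(n)$ for all but finitely many $n$, infinitely many of these levels also give $x(i)<g(n)$ for all $i<n$, i.e. $x\rest n\in S_n$. Thus $x$ is captured by $(S_n)$ infinitely often, so $\bigcup\ca\se\{x\in\baire:\,(\existinfty n)(x\rest n\in S_n)\}\in f\Nn(\Fin)$, which is the desired conclusion.

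The argument becomes routine once the correct functions $b_\alpha$ are isolated, so the one point genuinely needing care is identifying \emph{what} must be dominated: it is the entry-bounds $b_\alpha$, and crucially \emph{not} the positions of the nonempty levels $S^\alpha_n$. Because capture is only demanded infinitely often, $\leq^*$-domination (rather than everywhere domination) is enough, and no control over where the nonempty levels sit is required — recognizing that this reduces the whole combination to a single application of boundedness is the main thing to get right.
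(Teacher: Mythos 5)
Your proof is correct and follows essentially the same route as the paper's: both arguments extract from each witness $(S^\alpha_n)_{n\in\w}$ an entry-bounding function, use $\kappa<\b$ to obtain a single $\leq^*$-bound $g$, and then take as the combined witness the (finite) set of all sequences of length $n$ with entries below $g(n)$. Your pointwise verification that each $x\in F_\alpha$ is still captured infinitely often is a slightly cleaner way to finish than the paper's level-wise inclusion $S^\alpha_n\se T_n$, but the idea is identical.
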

\begin{proof}
    Let $\kappa<\b$ and $\{F_\alpha:\,\alpha<\kappa\}\se f\Nn(\Fin)$. For all $\alpha<\kappa$ there is $(S^\alpha_n:\,n\in\w)$, $S^\alpha_n\in\w^n$, such that $|S^\alpha_n|<\w$ and
    $$F_\alpha\se\{x\in\w^\w:\, (\existinfty n\in\w)(x\rest n\in S^\alpha_n)\}.$$
    For $\alpha<\kappa$ define $f_\alpha\in\w^\w$ by
    $$f_\alpha(n)=\max\{x(k):\,x\in S^\alpha_{n+1}\land k\leq n\}.$$
    Since $\kappa<\b$, there is $f\in\w^\w$ satisfying $f_\alpha\leq^*f$ for every $\alpha<\kappa$. Set $T_n=\{\sigma\in\w^n:\,(\forall i<n)(\sigma(i)<f(n))\}$. Then $|T_n|<\w$ for every $n\in\w$ and
    $$(\forall\alpha<\kappa)(\foralmostall n\in\w)(S^\alpha_n\se T_n).$$
    Hence, $F_\alpha\se F=\{x\in\w^\w:\, (\existinfty n\in\w)(x\rest n\in T_n)\}\in f\Nn(\Fin)$.
\end{proof}

\begin{problem}
    Is it consistent that $\add(f\Nn(\Fin))>\b$ and $\cof(f\Nn(\Fin))<\d$?
\end{problem}

\section{Interactions between the families}

Mirroring the situation in $2^\w$ one could expect that $f\Ee(h)\se f\Nn(h)$ for any reasonable function $h$. The following results show that this expectation is sometimes valid, sometimes not.

\begin{proposition}
    Let $h\in \w^\w$ satisfy $h(a+b)\ge h(a)h(b)$ for any $a,b\in\w$. Then $f\Ee(h)\se f\Nn(h)$.
\end{proposition}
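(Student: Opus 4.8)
The plan is to work through the equivalent formulation of $f\Nn(h)$ supplied by the preceding Lemma: $F\in f\Nn(h)$ iff there is a sequence $(S_n)_{n\in\w}$ with $S_n\se\w^n$, $\sum_n\frac{|S_n|}{h(n)}<\infty$, and $F\se\{x:(\existinfty n)(x\rest n\in S_n)\}$. Fix a witness for $F\in f\Ee(h)$: a partition of $\w$ into intervals $(I_n)_{n\in\w}$, finite sets $J_n\se\w^{I_n}$ with $\sum_n p_n<\infty$ where $p_n:=\frac{|J_n|}{h(|I_n|)}$, and $F\se\{x:(\foralmostall n)(x\rest I_n\in J_n)\}$. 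Write $a_n:=\sum_{k<n}|I_k|$, so $I_n=[a_n,a_{n+1})$. Since $f\Nn(h)$ is a $\sigma$-ideal, I would reduce to the pieces with a fixed threshold: setting $G_N=\{x\in\baire:(\forall n\ge N)(x\rest I_n\in J_n)\}$ we have $F\se\bigcup_N G_N$, and it suffices to prove $G_N\in f\Nn(h)$ for each $N$.

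The main obstacle is that a point $x\in G_N$ is constrained only on the blocks $I_n$ with $n\ge N$, while its initial segment $x\rest a_N$ is completely free, ranging over the infinite set $\w^{a_N}$; one cannot freeze these coordinates into a finite set at a single level. The idea is to exploit the $(\existinfty n)$ quantifier to spread the free part across levels. Enumerate $\w^{a_N}=\{e_0,e_1,\dots\}$, put $B_m=\{e_0,\dots,e_{m-1}\}$, and for $m\ge N$ define
\[
S_{a_{m+1}}=\{\,s\cons\tau_N\cons\cdots\cons\tau_m:\ s\in B_m,\ \tau_k\in J_k\ (N\le k\le m)\,\}\se\w^{a_{m+1}},
\]
with $S_j=\0$ at all other levels $j$. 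Each $x\in G_N$ has $x\rest a_N=e_i$ for some $i$, and then $x\rest a_{m+1}\in S_{a_{m+1}}$ for every $m>i$, so $x$ is caught infinitely often; thus $G_N\se\{x:(\existinfty j)(x\rest j\in S_j)\}$.

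It remains to estimate $\sum_j\frac{|S_j|}{h(j)}=\sum_{m\ge N}\frac{|B_m|\prod_{k=N}^m|J_k|}{h(a_{m+1})}$, and this is exactly where supermultiplicativity is used. Since $a_{m+1}=a_N+\sum_{k=N}^m|I_k|$, iterating $h(a+b)\ge h(a)h(b)$ gives $h(a_{m+1})\ge h(a_N)\prod_{k=N}^m h(|I_k|)$, hence
\[
\frac{|B_m|\prod_{k=N}^m|J_k|}{h(a_{m+1})}\le\frac{|B_m|}{h(a_N)}\prod_{k=N}^m p_k.
\]
As $\sum_k p_k<\infty$ forces $p_k\to0$, the tail products $\prod_{k=N}^m p_k$ decay geometrically in $m$, so with the modest choice $|B_m|=m$ the series $\sum_m|B_m|\prod_{k=N}^m p_k$ converges. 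Therefore $\sum_j\frac{|S_j|}{h(j)}<\infty$, giving $G_N\in f\Nn(h)$, and the $\sigma$-ideal property yields $F\in f\Nn(h)$.

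I expect the only delicate point to be balancing the two demands on $B_m$: it must exhaust $\w^{a_N}$ (so every $x$ is caught infinitely often) yet grow slowly enough to keep the series summable. The geometric decay inherited from $p_k\to0$ leaves ample room, so any sub-geometric growth of $|B_m|$ suffices. (All quantities are well defined: the $f\Ee(h)$ data already require $h(|I_n|)>0$, and supermultiplicativity then forces $h(a_N)>0$.)
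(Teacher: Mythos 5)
Your proof is correct, and its core construction is the same as the paper's: place concatenations of the pattern sets $J_k$ at the cumulative levels $\sum_{k\le m}|I_k|$, and use iterated supermultiplicativity of $h$ to bound $|S_n|/h(n)$ by a product of the quantities $p_k=|J_k|/h(|I_k|)$, which is then summable over the levels. The genuine difference is your handling of the initial blocks, and it matters: the paper's proof takes $S_n$ to be all concatenations from $J_0\times J_1\times\dots\times J_k$ and asserts $A\se\{x:\,(\existinfty n)(x\rest n\in S_n)\}$, but since membership in the basic $f\Ee(h)$ set only requires $(\foralmostall n)(x\rest I_n\in J_n)$, a point of $A$ that violates, say, $J_0$ on $I_0$ is caught at no level whatsoever, so the asserted containment fails as literally written. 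In the Cantor space one would repair this by replacing finitely many $J_n$ by all of $2^{I_n}$; in the Baire space $\w^{I_n}$ is infinite, so no such enlargement is available --- exactly the obstacle you isolate. Your two extra steps (reducing to the tails $G_N$ via $\sigma$-additivity of $f\Nn(h)$, then spreading an enumeration of $\w^{a_N}$ across levels with the sets $B_m$, whose linear growth is absorbed by the eventually geometric decay of $\prod_{k=N}^m p_k$) close precisely this gap, so your write-up is more complete than the published argument. One simplification you might note: having reduced to $G_N$, you can write $G_N=\bigcup_{s\in\w^{a_N}}\{x:\, x\rest a_N=s\,\wedge\,(\forall n\ge N)(x\rest I_n\in J_n)\}$, a countable union, and handle each piece by your estimate with $B_m$ replaced by the single sequence $s$; $\sigma$-additivity then finishes, with no need to balance the growth of $|B_m|$ against the tail products.
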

\begin{proof}
    Let $A\in f\Ee(h)$ with associated intervals $I_n, n\in \w$ and sets of patterns $J_n, n\in\w$. Set
    \[
        S_n=\{x_0\cup x_1\cup \dots \cup x_k:\, (x_0, x_1, \dots, x_k)\in J_0\times J_1 \times \dots \times J_k\}
    \]
    for $n=\sum_{i=0}^k|I_k|$ and $S_n=\0$ otherwise. Then, for $n=\sum_{i=0}^k|I_k|$,
    \[
        \frac{|S_n|}{h(n)}=\frac{|J_0|\cdot |J_1|\cdot \ldots \cdot|J_k|}{h(\sum_{i=0}^k|I_i|)}\le \prod_{i=0}^{k}  \frac{|J_i|}{h(|I_i|)}\le C \frac{|J_k|}{h(|I_k|)},
    \]
    where $C=\prod\left\{\frac{|J_i|}{h(|I_i|)}:\, \frac{|J_i|}{h(|I_i|)}>1, i\in\w\right\}$. Hence
    \[
        \sum_{n\in\w}\frac{|S_n|}{h(n)}\le C \sum_{n\in\w}\frac{|J_n|}{h(|I_n|)}<\infty.
    \]
    Notice that $A\se\{x\in\w^\w:\, (\existinfty n\in \w)(x\rest n\in S_n)\}\in f\Nn(h)$.
\end{proof}

\begin{proposition}
    $f\Ee(n\mapsto n^2)\not\se f\Nn(p)$ for any polynomial $p$.
\end{proposition}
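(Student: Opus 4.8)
The plan is to exhibit a \emph{single} set $F \in f\Ee(n\mapsto n^2)$ that fails to lie in $f\Nn(p)$ for every polynomial $p$ simultaneously, via a Borel–Cantelli argument against an auxiliary product measure carried by $F$. First I would fix intervals $I_m$ with $|I_m| = m^2$ (for $m\ge 1$, with $I_0$ a singleton) and choose, for each $m$, a set $J_m \se \w^{I_m}$ of exactly $m^2$ distinct patterns, and put
$$F = \{x\in\baire:\, (\forall m)(x\rest I_m \in J_m)\}.$$
Since $\sum_m \frac{|J_m|}{|I_m|^2} = \sum_m \frac{m^2}{m^4} = \sum_m \frac{1}{m^2} < \infty$, we get $F \in f\Ee(n\mapsto n^2)$ directly from \Cref{definition of fake E} (the ``$\forall$'' condition is stronger than the required ``$\foralmostall$''). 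The point of letting $|J_m|$ grow, rather than keeping the blocks binary, is that the number of initial segments of $F$ through the end of block $m-1$, namely $B_{m-1} := \prod_{j<m}|J_j| = ((m-1)!)^2$, grows \emph{faster than any polynomial}, while the tight $f\Ee(n\mapsto n^2)$ budget is still met.

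Next, suppose toward a contradiction that $F \in f\Nn(p)$. By the equivalent characterization (the Lemma above), there is $(S_n)_{n\in\w}$ with $S_n \se \w^n$ and $C := \sum_n \frac{|S_n|}{p(n)} < \infty$ such that $F \se \{x : (\existinfty n)(x\rest n \in S_n)\}$. I would equip $F$ with the product probability measure $\mu = \bigotimes_m \nu_m$, where $\nu_m$ is uniform on the finite set $J_m$. The key estimate is that for any $\sigma\in\w^n$ with $L_m \le n < L_{m+1}$ (writing $L_m = |I_0|+\dots+|I_{m-1}|$), fixing $\sigma$ already pins down blocks $0,\dots,m-1$, so $\mu([\sigma]\cap F) \le 1/B_{m-1}$. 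Combining this with the termwise bound $|S_n| \le C\,p(n)$ (each summand of $C$ is at most $C$), and using that $p$ is eventually increasing, I would estimate
$$\sum_n \mu(\{x : x\rest n \in S_n\}) \le \sum_m \frac{1}{B_{m-1}}\sum_{L_m \le n < L_{m+1}} |S_n| \le \sum_m \frac{|I_m|\cdot C\, p(L_{m+1})}{B_{m-1}}.$$
Since $L_{m+1} = \sum_{j\le m} j^2 = O(m^3)$, the quantity $|I_m|\,p(L_{m+1})$ is a polynomial in $m$, whereas $B_{m-1} = ((m-1)!)^2$ is factorial; hence the series converges. By Borel–Cantelli, $\mu$-almost every $x\in F$ satisfies $x\rest n \in S_n$ only finitely often, and since $\mu(F)=1$ some such $x\in F$ lies outside $\{x : (\existinfty n)(x\rest n\in S_n)\}$, contradicting the inclusion above. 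As $(S_n)$ and $p$ were arbitrary, this single $F$ witnesses $f\Ee(n\mapsto n^2)\not\se f\Nn(p)$ for every polynomial $p$.

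The main obstacle is obtaining uniform control over \emph{all} levels $n$ and \emph{all} polynomials $p$ at once: one must make the branching of $F$ outrun $p(L_{m+1})$ for every fixed degree while respecting the constraint $\sum |J_m|/|I_m|^2 < \infty$. This is precisely what forces the balance $|I_m| = |J_m| = m^2$, large enough that $\prod_{j<m}|J_j|$ beats every polynomial, yet small enough that $\sum 1/|I_m|$ converges. A secondary point to check carefully is that levels $n$ lying strictly inside a block $I_m$ obey the same estimate $\mu([\sigma]\cap F)\le 1/B_{m-1}$, so that the Borel–Cantelli sum genuinely runs over all $n$ and not merely over block boundaries.
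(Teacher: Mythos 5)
Your proof is correct, and it reaches the statement by a genuinely different route than the paper, although both rest on the same core idea: exhibit a single $f\Ee(n\mapsto n^2)$ set whose cumulative branching (the number of admissible initial patterns up to level $n$) outgrows every polynomial, while the budget $\sum_m |J_m|/|I_m|^2$ stays finite. The paper implements this with blocks of length $k+1$ carrying only \emph{two} patterns, $J_k=\{\mbb{0}\rest I_k,\mbb{1}\rest I_k\}$; it then regroups consecutive blocks into super-blocks carrying $2^{k+1}$ binary patterns and runs an explicit pigeonhole diagonalization: the restrictions forbidden by the $S_j$ are polynomially many in $k$, hence eventually fewer than $2^{k+1}$, so one can choose on each super-block a pattern avoiding all of them and concatenate these choices into a concrete point $x\in A\bez B$. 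You instead take $|I_m|=|J_m|=m^2$, so that the cumulative branching $B_{m-1}=((m-1)!)^2$ beats every polynomial with no regrouping, and you replace the explicit construction by an averaging argument: the uniform product measure $\mu$ concentrated on $F$ gives each level-$n$ bad event mass at most $|S_n|/B_{m-1}$, the resulting series converges, and Borel--Cantelli yields (non-constructively) a point of $F$ lying in only finitely many $\{x:\,x\rest n\in S_n\}$, contradicting the assumed inclusion. Your route buys softness — the single estimate $\mu([\sigma]\cap F)\le 1/B_{m-1}$ absorbs all bookkeeping about how levels $n$ interleave with block boundaries, and the argument visibly extends to any $g$ in place of $p$ for which $\sum_m m^2\, g(L_{m+1})/((m-1)!)^2<\infty$ — while the paper's route buys an explicit witness point and a witness set of minimal width (two patterns per block). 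Two details in your write-up should be patched, both cosmetic: for $m=0$ the prescription ``exactly $m^2$ patterns'' would give $J_0=\0$ and hence $F=\0$, so declare $J_0$ to be a single pattern (this is also what makes $B_{m-1}=((m-1)!)^2$ exact); and since a polynomial need not be positive or monotone everywhere, dominate $p$ by an increasing positive polynomial before invoking $|S_n|\le C\,p(L_{m+1})$.
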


\begin{proof}
    We will construct a set $A\in f\Ee(n\mapsto n^2)\bez f\Nn(p)$. Let us find suitable $I_k$ and $J_k$, $k\in\w$. Set
    \begin{align*}
        I_k&=\left[\frac{k(k+1)}{2},\frac{(k+1)(k+2)}{2}\right),
        \\
        J_k&=\{\mbb{0}\rest I_k, \mbb{1}\rest I_k\}
    \end{align*}
    and $A=\{x\in\w^\w:\, (\foralmostall n\in\w)(x \rest I_n\in J_n)\}$. Let $B\in f\Nn(p)$ with associated $(S_n:\, n\in\w)$. We will find $x\in A\bez B$. Let $m_k=\frac{1}{2}(\frac{k(k+1)}{2})(\frac{k(k+1)}{2}+1)$. Notice that
    \[
        [m_k, m_{k+1})=\bigcup_{j=\frac{k(k+1)}{2}}^{\frac{(k+1)(k+2)}{2}-1}I_j.
    \]
    Approximate
    \[
        \sum_{j=m_{k+1}+1}^{m_{k+2}}|\{\tau\rest [m_k, m_{k+1}):\, \tau\in S_j\}|\le \sum_{j=1}^{m_{k+2}}|S_j|\le \sum_{j=1}^{m_{k+2}}p(j)\le q(m_{k+2})=r(k),
    \]
    for some polynomials $q, r$. There exists $K\in\w$ such that $r(k)<2^{k+1}$ for $k\ge K$. Hence, for such $k$ there is $\sigma_k\in \{0,1\}^{[m_k, m_{k+1})}$ such that
    \[
        \sigma_k\notin \bigcup_{j=m_{k+1}+1}^{m_{k+2}}\{\tau\rest [m_k, m_{k+1}):\, \tau\in S_j\}
    \] 
    and
    \[
        (\forall j\in\{\tfrac{1}{2} k(k+1), \ldots , \tfrac{1}{2}(k+1)(k+2)-1\})(\sigma_k\rest I_j\in J_j).
    \]
    Set $x=\mbb{0}\rest m_K \cup \bigcup_{k\ge K}\sigma_k$. Clearly, $x\in A\bez B$.
\end{proof}

The next result also contrasts with properties of $\Ee$ in the Cantor space.

\begin{theorem}
    Let $h\in \w^\w$ be increasing. Then $f\Ee(h)$ is not an ideal.
\end{theorem}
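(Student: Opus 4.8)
The plan is to produce two sets $F_0,F_1\in f\Ee(h)$ whose union is not in $f\Ee(h)$. The mechanism I would exploit is the failure of multiplicativity of the normalising factor: in $2^\w$ one has $|2^{I}|=2^{|I|}$, so densities multiply exactly under merging of blocks and the ideal property survives, whereas here $h(|I|)$ behaves badly under both splitting and merging of intervals. Accordingly I would build $F_0$ and $F_1$ over two \emph{misaligned} interval partitions so that no single partition can capture both simultaneously.

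\textbf{Construction.} Fix a fast increasing length sequence $(\ell_k)$ and menu sizes $(d_k)$, both to be tuned to $h$. Let $\mathcal A=(A_k)$ cut $\w$ into consecutive intervals of length $\ell_k$, and let $\mathcal B=(B_k)$ be the same partition shifted by one coordinate, so the boundaries of $\mathcal A$ and $\mathcal B$ never coincide. Choose value menus $V_k^0,V_k^1\se\w$ with $|V_k^i|=d_k$ and put
\[ F_0=\{x\in\w^\w:\,(\foralmostall k)(x\rest A_k\text{ is constant with value in }V_k^0)\}, \]
\[ F_1=\{x\in\w^\w:\,(\foralmostall k)(x\rest B_k\text{ is constant with value in }V_k^1)\}. \]
Since $|A_k|=|B_k|=\ell_k$, choosing $(d_k)$ with $\sum_k d_k/h(\ell_k)<\infty$ makes $F_0,F_1\in f\Ee(h)$ straight from the definition.

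\textbf{Reduction to pattern counting.} Suppose for contradiction that $F_0\cup F_1\se\{x:(\foralmostall n)(x\rest I_n\in J_n)\}$ with $\sum_n|J_n|/h(|I_n|)<\infty$. The ``core'' $\prod_k V_k^0$, read as block-constant functions, is a \emph{compact} subset of $F_0$, and it is covered by the increasing closed sets $E_m=\{x:(\forall n\ge m)(x\rest I_n\in J_n)\}$; by the Baire category theorem one $E_m$ is relatively open in the core, so after fixing finitely many initial blocks, $J_n$ must contain \emph{every} pattern $F_0$ realises on $I_n$ for all large $n$. Hence $|J_n|\ge c_{F_0}(I_n):=\prod\{d_k:A_k\cap I_n\ne\0\}$ eventually, and symmetrically $|J_n|\ge c_{F_1}(I_n)$. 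It therefore suffices to show that $\sum_n\max\big(c_{F_0}(I_n),c_{F_1}(I_n)\big)/h(|I_n|)=\infty$ for \emph{every} partition $(I_n)$.

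\textbf{Divergence and the main obstacle.} Here the misalignment is the whole point. Each ``defect'' coordinate $L_k$ (the single coordinate where $A_k$ has started but $B_{k-1}$ has not yet ended) must be covered by some $I_n$; if $I_n$ is the singleton $\{L_k\}$ its term is $\ge d_k/h(1)$, and $\sum_k d_k/h(1)=\infty$, whereas any longer $I_n$ through $L_k$ meets two consecutive blocks of $\mathcal A$ or of $\mathcal B$, forcing $\max(c_{F_0},c_{F_1})(I_n)\ge d_{k-1}d_k$ while paying only an $\ell_k$-sized length. Summing these forced contributions and arranging $d_{k-1}d_k$ to dominate $h(\ell_k)$ while $\sum_k d_k/h(\ell_k)$ stays finite yields a divergent sum, contradicting the hypothesis, so $F_0\cup F_1\notin f\Ee(h)$. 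The hard part, and the step I expect to be the genuine obstacle, is making this divergence uniform over \emph{all} partitions and \emph{all} increasing $h$ at once: a coarse capturing partition is cheap in multiplicative ($2^\w$-like) regimes, while a very fine one becomes cheap when $h$ grows extremely fast, so a one-coordinate shift suffices only for moderately growing $h$; for rapidly growing $h$ the two partitions must be misaligned at a \emph{finer relative scale} (so every long block still meets many blocks of one partition), and one must carefully amortise the cost of blocks straddling many defects. Calibrating $(\ell_k)$ and $(d_k)$ to $h$ so that no refinement, coarsening, or intermediate partition produces a convergent sum is where the real work lies.
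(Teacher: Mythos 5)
Your reduction to pattern counting is sound and is actually a nice observation: the cores $\prod_k V^0_k$, $\prod_k V^1_k$ are compact, and the Baire category argument correctly shows that any cover of $F_0\cup F_1$ by a partition $(I_n)$ with pattern sets $(J_n)$ must eventually satisfy $|J_n|\ge\max\bigl(c_{F_0}(I_n),c_{F_1}(I_n)\bigr)$. The genuine gap is the step you yourself defer as ``where the real work lies'': that step is not a calibration detail, it \emph{is} the theorem, and for the shift-by-one scheme it cannot be carried out at all. Concretely, take $h(n)=2^{2^n}$. Membership of $F_0,F_1$ in $f\Ee(h)$ forces $\sum_k d_k/h(\ell_k)<\infty$, hence $d_k\le C\,h(\ell_k)$ for some constant $C$. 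Now let the cover-builder merge consecutive $\mathcal{A}$-blocks into groups $G_n$ of consecutive indices, each with at least two blocks, and set $I_n=\bigcup_{k\in G_n}A_k$, of length $L_n=\sum_{k\in G_n}\ell_k$. Since $\mathcal{B}$ is $\mathcal{A}$ shifted by one coordinate, $I_n$ meets only the $\mathcal{B}$-blocks indexed by $G_n$ plus one overhanging block, so, writing $j_n=\min G_n$,
\[
\max\bigl(c_{F_0}(I_n),c_{F_1}(I_n)\bigr)\le d_{j_n-1}\prod_{k\in G_n}d_k\le C^{|G_n|+1}\,2^{\,2^{\ell_{j_n-1}}+\sum_{k\in G_n}2^{\ell_k}},
\qquad h(|I_n|)=2^{\,2^{L_n}},\quad 2^{L_n}=\prod_{k\in G_n}2^{\ell_k}.
\]
Adding one more block to a group multiplies the product $\prod_{k\in G_n}2^{\ell_k}$ by at least $2$ while only adding $2^{\ell_k}+\log_2C$ to the exponent on the left, so a greedy choice of groups (keep absorbing blocks until the margin between the two exponents exceeds $n$) makes the $n$-th term at most $2^{-n}$. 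Taking $J_n$ to be all patterns that $F_0$ or $F_1$ realizes on $I_n$ (every member of $F_0\cup F_1$ eventually follows its core, so this is a legitimate cover), we get $F_0\cup F_1\in f\Ee(h)$ no matter how $(\ell_k)$ and $(d_k)$ were tuned. So for fast-growing increasing $h$ your two sets always have a common cover, and the proposal cannot prove the statement as claimed.

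It is worth seeing how the paper's construction sidesteps exactly this. There the two sets allow freedom only at single interleaved coordinates $a_0<b_1<a_1<b_2<\cdots$: a point of $A$ may take any value below $E(a_n)$ at coordinate $a_n$ and must vanish on $(a_n,a_{n+1})$, with the crucial calibration $E(a_n)=h(b_{n+1})$ --- the amount of freedom of $A$ at $a_n$ is $h$ evaluated at the \emph{next special point of $B$} (and symmetrically). Given a putative cover $(I_n,J_n)$, infinitely many $I_n$ have their last special point in, say, $\{a_k:\,k\in\w\}$, say $a_{k_n}$; then $b_{k_n+1}\notin I_n$, so $|I_n|\le b_{k_n+1}$ and, $h$ being increasing, $h(|I_n|)\le h(b_{k_n+1})=E(a_{k_n})$, while eventually $|J_n|<h(|I_n|)$. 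Hence some value $e<E(a_{k_n})$ at coordinate $a_{k_n}$ appears in no pattern of $J_n$, and the point taking these omitted values at the coordinates $a_{k_n}$ and $0$ elsewhere lies in $A$ yet satisfies $x\rest I_n\notin J_n$ for infinitely many $n$. Because the freedom is measured directly against $h$ at the other set's scale --- never against products of menu sizes --- no merging, refining, or repositioning of the covering partition can help, which is precisely the uniformity in $h$ that the shift-by-one construction lacks.
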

\begin{proof}
    We will construct sets $A,B\in f\Ee(h)$ such that $A\cup B\notin f\Ee(h)$.

    Let $a_0=b_0=0$, $b_1=1$, $E(a_0)=h(b_1)$, $a_1=\min\{l:\,\frac{E(a_0)}{h(l-a_0)}<\frac{1}{2}\}$, $E(b_1)=h(a_1)$. Assume that at $n$-th step we have $(a_k:\, k<n), (b_k:\, k<n)$, $(E(a_k):\, k<n-1), (E(b_k):\, k<n)$. Set
    \begin{align*}
        &b_n=\min\left\{l:\, \frac{E(b_{n-1})}{h(l-b_{n-1})}<\frac{1}{2^n}\right\},
        \\
        &E(a_{n-1})=h(b_n),
        \\
        &a_n=\min\left\{l:\, \frac{E(a_{n-1})}{h(l-a_{n-1})}<\frac{1}{2^n}\right\},
        \\
        &E(b_n)=h(a_n).
    \end{align*}
    Set
    \begin{align*}
        A&=\{x\in\w^\w:\, (\foralmostall n\in\w)(x(a_n)<E(a_n)\,\land\,x\rest (a_n, a_{n+1})=\mbb{0}\rest (a_n, a_{n+1}))\},
        \\
        B&=\{x\in\w^\w:\, (\foralmostall n\in\w)(x(b_n)<E(b_n)\,\land\,x\rest (b_n, b_{n+1})=\mbb{0}\rest (b_n, b_{n+1}))\}.
    \end{align*}
    Notice that indeed $A\in f\Ee(h)$, since $\sum_{n\in\w}\frac{E(a_n)}{h(a_{n+1}-a_n)}\le\sum_{n\in\w}\frac{1}{2^{n+1}}<\infty$. The same remains true for $B$.

    Suppose that there is $C\in f\Ee(h)$ such that $A\cup B\se C$. Let
    \[
        C=\{x\in \w^\w:\, (\foralmostall n\in\w)(x\rest I_n\in J_n)\},
    \]
    with associated partition of $\w$ into intervals $\{I_n:\,n\in\w\}$ and sets of allowed patterns $J_n, n\in\w$ satisfying $\sum_{n\in\w}\frac{|J_n|}{h(|I_n|)}<\infty$. Let $E_A=\{a_k:\, k\in\w\}$ and $E_B=\{b_k:\, k\in\w\}$. Assume that $\max \0=-1$. Without loss of generality
    \[
        N=\{n\in\w:\, \max(I_n\cap E_A)>\max (I_n\cap E_B)\}
    \]
    is infinite and $|J_n|<h(|I_n|)$ for $n\in N$. Denote $a_{k_n}=\max (I_n\cap E_A)$ for $n\in N$. Since $E(a_{k_n})=h(b_{k_n+1})\ge h(|I_n|)$, there exists $e_{k_n}<E(a_{k_n})$ such that $(\forall \sigma\in J_n)(\sigma(a_{k_n})\ne e_{k_n})$. Set $x\in\w^\w$ such that $x(a_{k_n})=e_{k_n}$ for $n\in N$ and $x(i)=0$ otherwise. Clearly $x\in A\bez C$.
\end{proof}

\begin{proposition}
    $f\Nn(\Fin)=\bigcup\left\{f\Nn(h):\, h\in\baire, \limsup_n h(n)=\infty\right\}.$
\end{proposition}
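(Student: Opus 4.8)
The plan is to establish the two inclusions separately. The inclusion $\bigcup\{f\Nn(h):h\in\baire,\ \limsup_n h(n)=\infty\}\se f\Nn(\Fin)$ is immediate from the Lemma characterizing membership in $f\Nn(h)$, while the reverse inclusion is the substantive half, and I would prove it by manufacturing, for a given $F$, a suitably fast-growing function $h$.

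For the easy inclusion, fix such an $h$ and $F\in f\Nn(h)$. By the characterization Lemma there is a sequence $(S_n)_{n\in\w}$ with $S_n\se\w^n$, $\sum_{n\in\w}\frac{|S_n|}{h(n)}<\infty$, and $F\se\{x\in\baire:(\existinfty n)(x\rest n\in S_n)\}$. Since a convergent series of nonnegative terms has every term finite, and each $h(n)$ is a natural number, each quotient $\frac{|S_n|}{h(n)}$ being finite forces $|S_n|<\w$. Thus the same sequence $(S_n)_{n\in\w}$ witnesses $F\in f\Nn(\Fin)$.

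For the main inclusion, fix $F\in f\Nn(\Fin)$ witnessed by $(S_n)_{n\in\w}$ with $S_n\se\w^n$ finite and $F\se\{x\in\baire:(\existinfty n)(x\rest n\in S_n)\}$. I would define $h\in\baire$ by
$$h(n)=(n+1)^2\bigl(|S_n|+1\bigr).$$
Then $h(n)\ge (n+1)^2\to\infty$, so $\limsup_n h(n)=\infty$ and $f\Nn(h)$ is well-defined; moreover
$$\sum_{n\in\w}\frac{|S_n|}{h(n)}\le\sum_{n\in\w}\frac{1}{(n+1)^2}<\infty,$$
so by the characterization Lemma the very same sequence $(S_n)_{n\in\w}$ witnesses $F\in f\Nn(h)$, placing $F$ in the union. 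There is no genuine obstacle here: the only delicate point is that the definition of $f\Nn(h)$ carries the standing requirement $\limsup_n h(n)=\infty$, so one must choose $h$ tending to infinity while still dominating the $|S_n|$ rapidly enough for summability. The factor $(n+1)^2$ resolves this tension, simultaneously forcing $h\to\infty$ and controlling the tail of the series.
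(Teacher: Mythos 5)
Your proof is correct and follows essentially the same route as the paper: the easy inclusion via summability forcing each $|S_n|$ finite, and the substantive inclusion by manufacturing a fast-growing $h$ dominating the $|S_n|$ (the paper takes $h(n)=|S_n|\cdot 2^n$ where you take $h(n)=(n+1)^2(|S_n|+1)$). Your variant is in fact marginally more careful, since it avoids $h(n)=0$ when $S_n=\0$, but the idea is identical.
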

\begin{proof}
    Clearly, if $h\in\baire$, $\limsup_n h(n)=\infty$, $f\Nn(h)\se f\Nn(\Fin)$.
    
    Take $A\in f\Nn(\Fin)$ and $(S_n)_{n\in\w}$ witnessing it. Consider $h(n)=|S_n|\cdot2^n$. $A\in f\Nn(h)$.
\end{proof}

However, analogous equality does not hold for $f\Ss(\Fin)$.

\begin{proposition}
    $\bigcup_{\substack{h\in\baire}}f\Ss(h)\sen f\Ss(\Fin).$
\end{proposition}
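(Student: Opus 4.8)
The plan is to establish both the inclusion $\bigcup_{h}f\Ss(h)\se f\Ss(\Fin)$ and its strictness, the former being routine and the latter requiring an explicit witness. For the inclusion, suppose $F\in f\Ss(h)$ is witnessed by a partition $(I_n)$ and patterns $(J_n)$ with $\sum_n \frac{|J_n|}{h(|I_n|)}<\infty$. Each summand of a convergent series of nonnegative terms is a finite real, so for every $n$ either $h(|I_n|)\ge 1$, whence $|J_n|=h(|I_n|)\cdot\frac{|J_n|}{h(|I_n|)}<\w$, or $h(|I_n|)=0$, which (with the usual convention) forces $|J_n|=0$. Either way $|J_n|<\w$, so the very same $(I_n),(J_n)$ witness $F\in f\Ss(\Fin)$.

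For strictness I would take the concrete set
\[
    F=\{x\in\w^\w:\,(\existinfty n)(x(n)=0)\},
\]
which lies in $f\Ss(\Fin)$ via the singleton partition $I_n=\{n\}$, $J_n=\{(n,0)\}$, and argue that $F\notin f\Ss(h)$ for every $h$ with $\limsup_n h(n)=\infty$. Suppose otherwise: then there are a partition of $\w$ into finite intervals $(I_m)$ and finite sets $J_m\se\w^{I_m}$ with $\sum_m \frac{|J_m|}{h(|I_m|)}<\infty$ and $F\se G:=\{x:\,(\existinfty m)(x\rest I_m\in J_m)\}$. I would then construct $x\in F\setminus G$, contradicting $F\se G$. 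The idea is to pick $x\rest I_m$, for infinitely many $m$, so that it simultaneously avoids $J_m$ and takes value $0$ at some coordinate of $I_m$, while keeping $x\rest I_m\notin J_m$ on all remaining intervals too; the resulting $x$ then has infinitely many zeros, so $x\in F$, yet $x\notin G$.

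The key mechanism is the asymmetry between long and singleton intervals. On any interval with $|I_m|\ge 2$ there are infinitely many patterns vanishing at a fixed coordinate of $I_m$, so the finite set $J_m$ can always be avoided while planting a zero. The only obstruction comes from singleton intervals $I_m=\{k\}$ with $(k,0)\in J_m$, where forcing a zero is incompatible with avoiding $J_m$. Here the summability condition does the work: since $h(1)$ is a fixed constant, infinitely many such ``bad'' singletons would contribute $\sum\frac{|J_m|}{h(1)}=\infty$ (with the term being infinite already when $h(1)=0$), contradicting convergence. Hence there are only finitely many bad singletons, leaving infinitely many intervals available for planting zeros; on each of these, and on the finitely many bad singletons, the finiteness of $J_m$ lets us pick a pattern avoiding it. This produces the desired $x\in F\setminus G$.

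I expect the singleton intervals to be the main obstacle, and the point worth emphasizing is precisely why the same argument fails for $f\Ss(\Fin)$: there the canonical representation of $F$ consists entirely of bad singletons $\{n\}$ with $J_n=\{(n,0)\}$, which is permissible because $f\Ss(\Fin)$ imposes no summability constraint at all. Thus it is exactly the convergence condition in $f\Ss(h)$ that forbids a dense use of singletons and drives the strict inequality.
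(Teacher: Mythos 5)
Your proof is correct and takes essentially the same route as the paper: the identical witness $F=\{x\in\baire:\,(\existinfty n)(x(n)=0)\}$, placed in $f\Ss(\Fin)$ via the singleton partition, and excluded from every $f\Ss(h)$ by diagonally building an $x$ that plants zeros on infinitely many intervals while escaping each finite pattern set $J_m$. If anything, your treatment of the singleton intervals is more careful than the paper's: the paper simply asserts that the set of non-singleton intervals is infinite (which, for an arbitrary summable witness, need not hold --- consider the all-singleton partition), whereas your observation that summability permits only finitely many ``bad'' singletons $\{k\}$ with $(k,0)\in J_m$ supplies precisely the justification needed to guarantee infinitely many intervals on which a zero can be planted while avoiding $J_m$.
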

\begin{proof}
    Set
    \[
        A=\{x\in \w^\w:\, (\existinfty n\in\w)(x(n)=0)\}.
    \]
    Then $A\in f\Ss(\Fin)\bez f\Ss(h)$ for any $h\in \w^\w$.
    
    To see that $A\in f\Ss(\Fin)$, let $I_n=\{n\}$ and $J_n=\{\{(n,0)\}\}$. To check that $A\notin f\Ss(h)$, fix a partition $\{I_n:\, n\in\w\}$ of $\w$ into intervals and $J_n\se \w^{I_n}, n\in\w$, such that $\sum_{n\in\w}\frac{|J_n|}{h(|I_n|)}<\infty$. Let $N=\{n\in\w:\, |I_n|>1\}$ and notice that $N$ is infinite. Let $x\in \w^\w$ be such that $x\rest I_n\notin J_n$ and moreover for $n\in N$ $x(k)=0$ for some $k\in I_n$. Then $x\in A\bez \{y\in\w^\w:\, (\existinfty n\in\w)(y\rest I_n\in J_n)\}$.
\end{proof}

\subsection*{$f\Nn(f)$ versus $f\Nn(g)$} \hfill\\
Now we will focus on chains and antichains (with respect to $\se$) among the ideals $f\Nn(h)$ for reasonable $h\in\w^\w$. Our analysis begins with the following lemma.

\begin{lemma}\label{lemat przekątniowy}
    Let $s,t\in \w^\w$. Assume that
    \[
        (\forall k\in\w)(\exists N\in\w)(\textstyle \sum_{i=k}^N s(i)>\sum_{i=1}^N t(i)).
    \]
    Then there exists $F=\{x\in\w^\w:\, (\existinfty n)(x\rest n\in S_n)\}$ with $|S_n|=s(n)$ such that for any $F'=\{x\in\w^\w:\, (\existinfty n)(x\rest n\in T_n)\}$ with $T_n$ satisfying $(\foralmostall n)(|T_n|\le t(n))$ it is the case that $F\not\se F'$.
\end{lemma}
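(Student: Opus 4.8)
The plan is to build the witnessing family $(S_n)$ as a ``tree of fans'': $\bigcup_n S_n$ will be a tree in which, above each node that could ever serve as a stem, I attach a fan of $S$-nodes whose total size outweighs the entire $T$-budget accumulated so far. The cumulative shape of the hypothesis — the right-hand sum running from $1$ while the left-hand sum may start at an arbitrary $k$ — is exactly what guarantees that each such fan can be found on a fresh block of levels, no matter how high up we already are.

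First I would fix, by repeatedly invoking the hypothesis, a partition of $\w$ into consecutive intervals together with an assignment of each interval to a ``tip''. Each fan will have size $\sum_{i\in(a,N]}s(i)$, a finite number, so the fan-tree is finitely branching and its (countably many) nodes can be processed in breadth-first order, every node preceding its descendants. When a node $\rho$ of length $\ell$ is processed and the intervals used so far fill $[0,a]$, I apply the hypothesis with $k=a+1$ to obtain $N$ with $\sum_{i=a+1}^N s(i)>\sum_{i=1}^N t(i)$, and I declare $(a,N]$ to be $\rho$'s fan-block. On this block I place, for each level $n\in(a,N]$, exactly $s(n)$ nodes extending $\rho$, assigning them pairwise distinct values at coordinate $\ell$ (``immediate divergence'' just past $\rho$) and filling the remaining coordinates by $\mathbb 0$; these nodes become the new tips, each of which later receives its own fan-block higher up. Since the blocks are consecutive and cover $\w$, and each contributes exactly $s(n)$ nodes at level $n$, this yields $|S_n|=s(n)$ for every $n$.

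Given any $T=(T_n)$ with $|T_n|\le t(n)$ for $n\ge n_0$, I then produce $x\in F\setminus F'$ by following surviving tips. Call a fan node \emph{killed} if some initial segment of it (at a level $\ge n_0$) lies in the corresponding $T_m$. The immediate-divergence device is the crux: any two nodes of one fan already differ at coordinate $\ell$, so for a fixed $\tau\in T_m$ there is at most one fan node $\sigma$ with $\sigma\rest m=\tau$; hence each $T$-node kills at most one node of the fan. Thus the number killed in $\rho$'s fan is at most $\sum_{i=1}^N|T_i|\le\sum_{i=1}^N t(i)$, strictly below the fan size $\sum_{i=a+1}^N s(i)$, so some $\sigma\supseteq\rho$ survives, i.e.\ $\sigma\rest m\notin T_m$ for all $m\le N$. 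Taking such a $\sigma$ as the next tip and iterating from a first tip lying above level $n_0$, I obtain a chain $\rho_0\se\rho_1\se\cdots$ of surviving $S$-nodes with lengths tending to infinity; its union $x$ satisfies $x\rest n\in S_n$ for infinitely many $n$, so $x\in F$, while $x\rest n\notin T_n$ for every $n>n_0$, so $x\notin F'$.

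The main obstacle is matching the adversary's \emph{cumulative} reach against a \emph{local} supply of $S$-nodes: a single $T$-node placed arbitrarily low threatens fan nodes arbitrarily high, so the budget each fan must beat is $\sum_{i=1}^N t(i)$, not merely the $t$-mass inside its block. This is precisely why the hypothesis compares $\sum_{i=k}^N s(i)$ with $\sum_{i=1}^N t(i)$, and why installing one independent fan per fresh block — rather than competing nested fans inside a single block — succeeds: each fan is obtained by its own application of the hypothesis and only needs to out-supply the total number of kills, which immediate divergence caps at the number of $T$-nodes. The remaining points are routine bookkeeping: ordering the tips so that each eventually gets a fan-block above its own level, absorbing the finitely many levels $n<n_0$ (and the trivial level $0$) by starting the chain high enough, and keeping the $s(n)$ fan values at coordinate $\ell$ distinct across the whole fan so that distinctness and immediate divergence hold at once.
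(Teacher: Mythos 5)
Your proposal is correct and follows essentially the same route as the paper's own proof: both build $\bigcup_n S_n$ as a tree of fans that diverge immediately past each stem with pairwise distinct values at that coordinate, allocate each fan a fresh block of levels by invoking the cumulative hypothesis, and then extract $x\in F\setminus F'$ via a pigeonhole-driven chain of surviving nodes. Two bookkeeping points to tighten: the kill count should be $\sum_{i=n_0}^{N}|T_i|\le\sum_{i=1}^{N}t(i)$ rather than $\sum_{i=1}^{N}|T_i|$ (since $|T_i|\le t(i)$ is only guaranteed for $i\ge n_0$, although by your own definition only kills at levels $\ge n_0$ count anyway), and the ``first tip lying above level $n_0$'' must itself be surviving, which you obtain by running the same pigeonhole iteration starting from the root (vacuously surviving) rather than by picking an arbitrary high tip.
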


\begin{proof}
    Without loss of generality we can assume that $s(0)=0$. Let $a_0^0=0$, $S_0=\0$ and $S_1=\{\sigma^1_i:\,1\leq i\leq s(1)\}$ contain $s(1)$ distinct sequences of length $1$. Suppose now that we have already defined $S_0,S_1,\ldots,S_{a^n_{s(n)}}$ and $a^n_0<a^n_1<\ldots<a^n_{s(n)}$ for some $n\geq 0$. Let $a^{n+1}_0=a^n_{s(n)}$ and $a^{n+1}_i$ be the smallest natural number satisfying
    $$\sum_{j=a^{n+1}_{i-1}+1}^{a^{n+1}_i} s(j)>\sum_{j=1}^{a^{n+1}_i} t(j)$$
    for $0<i\leq s(n+1)$. Then, for each $0<i\leq s(n+1)$, $1\leq l\leq a^{n+1}_i-a^{n+1}_{i-1}$, define sets 

    \[
    S_{a^{n+1}_{i-1}+l}=\left\{{\sigma^{n+1}_i}\cons p\cons 00\ldots 0\in\w^{a^{n+1}_{i-1}+l}:\,\sum_{j=1}^{l-1} s(a^{n+1}_{i-1}+j)\leq p<\sum_{j=1}^{l} s(a^{n+1}_{i-1}+j) \right\}.
    \]
    Moreover, fix an enumeration $S_j=\{\sigma^j_i:\,1\leq i\leq s(j)\}$ for every $a^{n+1}_0< j\leq a^{n+1}_{s(n+1)}$.

    Notice that the above construction satisfies a condition
    \begin{align}\label{lemat_przekątniowy:warunek}
        \textstyle(\forall 0\le p< \sum_{j=a_{i-1}^k+1}^{a_i^k}s(j))(\exists \tau \in \bigcup_{j=a_{i-1}^k+1}^{a_i^k}S_j)(\tau={\sigma_i^k}\concat p\concat 0\concat \dots\concat 0)
    \end{align}
    for every $k\in\w$, $i\in\{1,2,\ldots,s(k)\}$. Finally, set
    \[
        F=\{x\in\w^\w:\, (\existinfty n)(x\rest n\in S_n)\}.
    \]

    Now, let $(T_n:\,n\in\w)$, $T_n\se \w^n$, satisfy $(\foralmostall n)(|T_n|\leq t(n))$. Let
        \[
            F'=\{x\in\w^\w:\, (\existinfty n)(x\rest n\in T_n)\}.
        \]
    We will find $x\in F\bez F'$. There is $n_0\in\w$ such that $s(n_0)>t(n_0)$ and $|T_n|\leq t(n)$ for all $n\geq n_0$. Take any $\xi_0=\sigma_{i_0}^{n_0}\in S_{n_0}\bez T_{n_0}$. From condition (\ref{lemat_przekątniowy:warunek}) there are $\sum_{i=a^{n_0}_{i_0-1}+1}^{a^{n_0}_{i_0}}s(i)$ many candidates for $p$ such that $\sigma^{n_0}_{i_0}\se {\sigma^{n_0}_{i_0}}\concat p \concat 0\dots 0\in S_{n_1}$ for some $n_1\in [a^{n_0}_{i_0-1}+1, a^{n_0}_{i_0}]$. Hence, from the pigeonhole principle ($\sum_{i=a^{n_0}_{i_0-1}+1}^{a^{n_0}_{i_0}}s(i)>\sum_{i=n_0}^{a^{n_0}_{i_0}}|T_i|$), there is $\xi_1=\sigma^{n_1}_{i_1}\in S_{n_1}$ such that $\xi_0\se \xi_1$ and $\xi_1(n_0)\ne \tau(n_0)$ for every $\tau\in\bigcup_{k\in [n_0+1, a^{n_0}_{i_0}]}T_k$.

    Suppose now that we have already defined $\xi_0\se\xi_1\se\ldots\se\xi_m$ such that $\xi_m=\sigma_{i_m}^{n_m}\in S_{n_m}$ and $(\forall k>n_0)(\xi_m\rest k\not\in T_k)$. Again, from condition (\ref{lemat_przekątniowy:warunek}), there are $\sum_{i=a^{n_m}_{i_m-1}+1}^{a^{n_m}_{i_m}}s(i)$ many candidates for $p$ such that $\sigma^{n_{m}}_{i_{m}}\se {\sigma^{n_m}_{i_m}}\concat p \concat 0\dots 0\in S_{n_{m+1}}$ for some $n_{m+1}\in [a^{n_m}_{i_m-1}+1, a^{n_m}_{i_m}]$. Hence, from the pigeonhole principle ($\sum_{i=a^{n_m}_{i_m-1}+1}^{a^{n_m}_{i_m}}s(i)>\sum_{i=n_0}^{a^{n_m}_{i_m}}|T_i|$), we can choose $\xi_{m+1}=\sigma^{n_{m+1}}_{i_{m+1}}\in S_{n_{m+1}}$ such that $\xi_m\se\xi_{m+1}$ and $\xi_{m+1}(n_m)\ne \tau(n_m)$ for all $\tau\in\bigcup_{k\in [n_m+1, a^{n_m}_{i_m}]} T_k$.

    Take $x=\bigcup_{n\in\w}\xi_n$. Since $x\rest n_m=\xi_m\in S_{n_m}$ for every $m\in\w$, $x\in F$. Furthermore, $x\not\in F'$ because $x\rest k\not\in T_k$ for every $k>n_0$.
\end{proof}

\begin{corollary}\label{wniosek po przekątniowym}
    There exists $F=\{x\in\w^\w:\, (\existinfty n)(x\rest n\in S_n)\}$ with $|S_n|=s(n)$ such that for any $F'$ associated with $T_n$ satisfying $(\foralmostall n)(|T_n|<s(n))$, it holds that $F\not\se F'$.
\end{corollary}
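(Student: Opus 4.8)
The plan is to obtain this as the immediate special case $t=s-1$ of \Cref{lemat przekątniowy}. Indeed, the condition $(\foralmostall n)(|T_n|<s(n))$ appearing here is literally the condition $(\foralmostall n)(|T_n|\le s(n)-1)$, so setting $t(n)=s(n)-1$ recovers exactly the hypothesis imposed on $F'$ in the lemma. We may assume $s(n)\ge 1$ for all $n\ge 1$: if $s(n)=0$ for infinitely many $n$ then no $T$ satisfies $(\foralmostall n)(|T_n|<s(n))$ and the statement is vacuously true, while the finitely many indices with $s(n)=0$ (in particular the normalization $s(0)=0$ used in the proof of the lemma) may be absorbed without affecting an eventual condition. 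Under this assumption $t=s-1$ is a genuine element of $\w^\w$.

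Thus the only thing to check is that the pair $(s,t)$ meets the diagonal hypothesis of \Cref{lemat przekątniowy}, namely
\[
(\forall k\in\w)(\exists N\in\w)\Big(\textstyle\sum_{i=k}^{N}s(i)>\sum_{i=1}^{N}t(i)\Big).
\]
This is a one-line telescoping computation. Writing $\sum_{i=1}^{N}t(i)=\sum_{i=1}^{N}s(i)-N$ and $\sum_{i=k}^{N}s(i)=\sum_{i=1}^{N}s(i)-\sum_{i=1}^{k-1}s(i)$, the inequality collapses to $N>\sum_{i=1}^{k-1}s(i)$. Since the right-hand side is a fixed natural number depending only on $k$, any sufficiently large $N$ works, so the hypothesis holds for every $k$.

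Applying \Cref{lemat przekątniowy} to this pair then produces $F=\{x\in\w^\w:\,(\existinfty n)(x\rest n\in S_n)\}$ with $|S_n|=s(n)$ such that $F\not\se F'$ for every $F'$ whose associated $T_n$ satisfy $(\foralmostall n)(|T_n|\le t(n))$, i.e.\ $(\foralmostall n)(|T_n|<s(n))$, which is precisely the assertion. There is no genuine obstacle here: the entire combinatorial content lives in the lemma, and the corollary reduces to the observation that the choice $t=s-1$ automatically satisfies the lemma's summation condition, the only care being the bookkeeping of the finitely many (or degenerate) indices at which $s$ vanishes.
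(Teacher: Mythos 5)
Your proposal is correct and follows exactly the paper's own route: set $t(n)=s(n)-1$, verify the summation hypothesis of \Cref{lemat przekątniowy} by the telescoping identity $\sum_{i=1}^N t(i)-\sum_{i=k}^N s(i)=\sum_{i=1}^{k-1}s(i)-N$, and take $N$ large. Your extra remark handling indices where $s$ vanishes (so that $t\in\w^\w$) is a small piece of bookkeeping the paper leaves implicit, but the substance of the argument is identical.
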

\begin{proof}
    Take $t(n)=s(n)-1$. We will check that such $t$ satisfies the assumptions of \Cref{lemat przekątniowy}. Take any $k\in\w$.
    \[
        \sum_{i=1}^N t(i)-\sum_{i=k}^N s(i)=\sum_{i=1}^N (s(i)-1)-\sum_{i=k}^N s(i)=\sum_{i=1}^ks(i)-N,
    \]
    so it suffices to take $N=\sum_{i=1}^k s(i)+1$.
\end{proof}

\begin{corollary}\label{poróżnianie 1}
    Suppose that $\sum_{n\in\w}\frac{f(n)}{g(n)}<\infty$. Then there is $F\in f\Nn(g)\bez f\Nn(f)$.
\end{corollary}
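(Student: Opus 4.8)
The plan is to realize the separating set directly through \Cref{wniosek po przekątniowym}, taking $s=f$. Concretely, I would apply that corollary with $|S_n|=f(n)$ to obtain a set
\[
    F=\{x\in\w^\w:\,(\existinfty n)(x\rest n\in S_n)\}
\]
enjoying the diagonalization property that $F\not\se F'$ for every $F'$ arising from a sequence $(T_n)$ with $(\foralmostall n)(|T_n|<f(n))$. The guiding intuition is that $f$ is the exact threshold calibration: the defining sequence $(S_n)$ of $F$ has $\sum_n\frac{|S_n|}{f(n)}=\sum_n 1=\infty$, so $F$ lies just outside $f\Nn(f)$, while at the same time $\sum_n\frac{|S_n|}{g(n)}=\sum_n\frac{f(n)}{g(n)}<\infty$ places it comfortably inside $f\Nn(g)$.

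First I would record that $F\in f\Nn(g)$: this is immediate from the $(S_n)$-characterization recalled in the lemma following the definition of $f\Nn(h)$, since the witnessing sequence satisfies $\sum_n\frac{|S_n|}{g(n)}=\sum_n\frac{f(n)}{g(n)}<\infty$ by hypothesis. The substance of the argument is $F\notin f\Nn(f)$, which I would prove by contradiction. Assuming $F\in f\Nn(f)$, the same characterization supplies a sequence $(T_n)$ with $\sum_n\frac{|T_n|}{f(n)}<\infty$ and $F\se F'$, where $F'=\{x:(\existinfty n)(x\rest n\in T_n)\}$. The crucial step, and the only place the hypothesis on the series is used, is to convert summability into eventual domination: the general term of a convergent series tends to $0$, so $\frac{|T_n|}{f(n)}\to 0$, whence $|T_n|<f(n)$ for all but finitely many $n$. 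This is precisely the hypothesis of \Cref{wniosek po przekątniowym}, which then yields $F\not\se F'$, contradicting the choice of $(T_n)$.

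I do not anticipate a serious obstacle, as the technical load is borne by \Cref{wniosek po przekątniowym}; the single point needing care is the implication $\sum_n\frac{|T_n|}{f(n)}<\infty\Rightarrow(\foralmostall n)(|T_n|<f(n))$, which is exactly what makes $s=f$ simultaneously small enough to keep $F$ in $f\Nn(g)$ and tight enough to defeat every putative $f\Nn(f)$-witness. A minor bookkeeping remark: coordinates with $f(n)=0$ force both $S_n$ and any admissible $T_n$ to be empty and are invisible to both $\existinfty$ conditions, so before invoking the corollary we may assume $f(n)\ge 1$ for every $n$, which also legitimizes the corollary's internal passage to $t=s-1$.
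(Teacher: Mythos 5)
Your proposal is correct and follows essentially the same route as the paper's own proof: take $F$ from \Cref{wniosek po przekątniowym} with $|S_n|=f(n)$, observe that $\sum_{n}\frac{|S_n|}{g(n)}=\sum_{n}\frac{f(n)}{g(n)}<\infty$ puts $F$ in $f\Nn(g)$, and refute any witness $(T_n)$ of $F\in f\Nn(f)$ via $\sum_{n}\frac{|T_n|}{f(n)}<\infty\Rightarrow\lim_n\frac{|T_n|}{f(n)}=0\Rightarrow(\foralmostall n)(|T_n|<f(n))$. Your closing remark that one may assume $f(n)\ge 1$ (since coordinates with $f(n)=0$ force $S_n$ and any admissible $T_n$ to be empty) is a small but genuine tidying of a point the paper leaves implicit.
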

\begin{proof}
    Let $F=\{x\in\w^\w:\, (\existinfty n\in\w)(x\rest n\in S_n)\}$ be as in \Cref{wniosek po przekątniowym}, with $|S_n|=f(n)$. Clearly, $F\in f\Nn(g)$. Assume that $F\in f\Nn(f)$ and let $(T_n)_{n\in\w}$ witness it. Since $\sum_{n\in\w}\frac{|T_n|}{f(n)}<\infty$, $\lim_{n\to\infty}\frac{|T_n|}{f(n)}=0$. Hence, $|T_n|<f(n)$ for almost all $n\in\w$, a contradiction with \Cref{wniosek po przekątniowym}.
\end{proof}

The above corollary implies e.g. that $f\Nn(n\mapsto n^3)\bez f\Nn(n\mapsto n)\ne\0$. However, we still do not know whether $f\Nn(n\mapsto n^2)$ is different from $f\Nn(n\mapsto n)$. A stronger criterion is given in the next theorem.

\begin{theorem}
    Let $\lim_{n\to\infty}\frac{f(n)}{g(n)}=0$. Then $f\Nn(f)\sen f\Nn(g)$.
\end{theorem}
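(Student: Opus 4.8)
The plan is to prove the inclusion $f\Nn(f)\se f\Nn(g)$ first, as the soft part, and then spend the real effort on strictness by exhibiting a single $F\in f\Nn(g)\bez f\Nn(f)$. For the inclusion I would use the combinatorial characterization of $f\Nn(h)$: if $F\in f\Nn(f)$ is witnessed by $(S_n)$ with $\sum_n|S_n|/f(n)<\infty$, then since $\lim_n f(n)/g(n)=0$ gives $f(n)<g(n)$ (in particular $g(n)>0$) for all large $n$, the tail of $\sum_n|S_n|/g(n)$ is dominated termwise by the tail of $\sum_n|S_n|/f(n)$; so it converges and the same $(S_n)$ witnesses $F\in f\Nn(g)$.

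For strictness I would build $F=\{x\in\baire:\,(\existinfty n)(x\rest n\in S_n)\}$ directly, concentrating all the $S_n$ on a sparse set of ``marked'' levels $\ell_0<\ell_1<\dots$ and giving $S$ a tree structure: $S_{\ell_0}$ is a single root and every node of $S_{\ell_j}$ has exactly $D_j:=f(\ell_{j+1})$ children in $S_{\ell_{j+1}}$, chosen so that distinct children already differ at coordinate $\ell_j$ (pad the remaining new coordinates with $0$). Two goals must hold at once: (b) $\sum_j|S_{\ell_j}|/g(\ell_j)<\infty$, so that $F\in f\Nn(g)$; and (c) for every $(T_m)$ with $\sum_m|T_m|/f(m)<\infty$ there is a branch of the tree lying in $F$ but eventually avoiding all the $T_m$, whence $F\not\se\{x:\,(\existinfty m)(x\rest m\in T_m)\}$ and so $F\notin f\Nn(f)$.

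The hard part is reconciling (b) and (c), and this is exactly where $\lim f/g=0$ (rather than the weaker $\sum f/g<\infty$ of \Cref{poróżnianie 1}) is needed. A naive appeal to \Cref{lemat przekątniowy} with a uniform bound $t$ on $|T_m|$ fails: capturing all admissible witnesses forces $t$ to grow like $f$, and then the diagonal hypothesis together with $\sum|S_n|/g(n)<\infty$ becomes contradictory whenever $\sum f/g=\infty$ (e.g.\ $f(n)=n$, $g(n)=n^2$, the very case left open before the theorem). Instead I would not bound the witnesses uniformly. Fix admissible $(T_m)$ and set $c_j=\sum_{\ell_j<m\le\ell_{j+1}}|T_m|/f(m)$; then $\sum_j c_j<\infty$, so $c_j\to 0$. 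If each $\ell_{j+1}$ is chosen to be a record of $f$ (a point where $f$ exceeds all earlier values), then $\max_{\ell_j<m\le\ell_{j+1}}f(m)=f(\ell_{j+1})$, giving the block estimate $\sum_{\ell_j<m\le\ell_{j+1}}|T_m|\le f(\ell_{j+1})c_j<f(\ell_{j+1})=D_j$ for all large $j$. Since the $D_j$ children of a node have pairwise distinct restrictions to each length $m\in(\ell_j,\ell_{j+1}]$ (they already differ at $\ell_j<m$), each $\tau\in T_m$ forbids at most one child, so at most $\sum_{\ell_j<m\le\ell_{j+1}}|T_m|<D_j$ children are forbidden and a surviving child exists. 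Threading surviving children from some $\ell_J$ onward yields $x\in F$ with $x\rest m\notin T_m$ for all $m>\ell_J$, which is exactly (c).

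Finally, (b) is secured in the same induction. Having fixed $\ell_0,\dots,\ell_j$, the size $|S_{\ell_j}|=\prod_{i<j}f(\ell_{i+1})=:P_j$ is determined, and the next cost term is $P_j\cdot f(\ell_{j+1})/g(\ell_{j+1})$; because $f(\ell_{j+1})/g(\ell_{j+1})\to 0$ along the records of $f$, I can choose $\ell_{j+1}$ (a record, larger than $\ell_j$) with $f(\ell_{j+1})/g(\ell_{j+1})$ so small that this term is at most $2^{-j}$, whence $\sum_j|S_{\ell_j}|/g(\ell_j)<\infty$ and $F\in f\Nn(g)$. Thus the single obstacle---making the tree simultaneously cheap for $g$ yet branching richly enough to dodge every $f$-summable $(T_m)$---is precisely what the record choice resolves: one coordinate's worth of branching $f(\ell_{j+1})$ both dominates the block mass of any admissible $T$ and keeps the $g$-cost summable.
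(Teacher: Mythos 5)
Your proof is correct, and at its core it is the same diagonalization as the paper's: concentrate a ``fat'' set on a sparse sequence of levels where $f/g$ is tiny, give it a tree structure whose nodes have the form $\sigma\concat p\concat 0\cdots 0$, and defeat any $f$-summable witness $(T_m)$ by pigeonhole, the key point in both arguments being that an $f$-weighted sum controls the plain sum through $\max f$ (your block estimate $\sum_{\ell_j<m\le\ell_{j+1}}|T_m|\le c_j\,f(\ell_{j+1})$ is exactly the paper's $\sum_{n\le N}t(n)\le s\cdot\max\{f(i):i\le N\}$). The differences are in the packaging. The paper factors the threading/pigeonhole mechanism into \Cref{lemat przekątniowy} and then builds a mass function $h$ supported on the levels $n_k$ at which $g$ successively increases, with $h(n_k)=k\cdot\max\{f(j):j\le n_k\}$; the growing factor $k$ does the job of your $c_j\to 0$, since $\sum_{n\le n_k}t(n)\le s\cdot\max\{f(i):i\le n_k\}\le h(n_k)$ once $k\ge s=\sum_n|T_n|/f(n)$. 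You instead mark record levels of $f$, take per-node branching $D_j=f(\ell_{j+1})$, and let the summability of $\sum_j c_j$ supply the eventual strict inequality. One point where your version is genuinely tighter: the set produced by \Cref{lemat przekątniowy} depends on the bounding function $t$ as well as on $s$, whereas in the theorem $t$ varies with the witness $F'$ being defeated; the paper's literal appeal to the lemma thus has a quantifier-order wrinkle (what is really needed, and what its construction does support, is a witness-uniform strengthening). Your construction depends only on $f$ and $g$, with all witness-dependent data (the threshold $J$ and the threading) confined to the verification, so your write-up is self-contained and avoids that wrinkle; the soft inclusion $f\Nn(f)\se f\Nn(g)$ is also handled correctly (modulo emptying the finitely many initial $S_n$).
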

\begin{proof}
    Let $(n_k)_{k\in\w}$ be given by $n_0=0$ and $n_{k+1}=\min\{j\in\w:\, j>n_k\,\land\,g(j)>g(n_k)\}$. See that
    \[
        \lim_{k\to\infty}\frac{\max\{f(j):\, j\le n_k\}}{g(n_k)}=0.
    \]
    Indeed, take $\varepsilon>0$ and let $N_\varepsilon\in\w$ be such that $\frac{f(n)}{g(n)}<\varepsilon$ for every $n\geq N_\varepsilon$. Let $K_\varepsilon\in\w$ such that $n_{K_\varepsilon}\ge N_\varepsilon$ and $\max\{f(j):\,j<N_\varepsilon\}<f(j')$ for some $j'\le n_{K_\varepsilon}$. Then for every $k>K_\varepsilon$
    \[
        \frac{\max\{f(j):\, j\le n_k\}}{g(n_k)}= \frac{f(j')}{g(n_k)}\le \frac{f(j')}{g(j')}<\varepsilon,
    \]
    where $j'\le n_k$ such that $f(j')=\max\{f(j):\, j\le n_k\}$.
    
    Without loss of generality, we may assume that $\frac{\max\{f(j):\, j\le n_k\}}{g(n_k)}<\frac{1}{k2^k}$ for all $k\in\w$. Set
    \[
        h(m)=\begin{cases}
                k\max\{f(j):\, j\le n_k\}, \hfill \tn{ if } (\exists k)(m=n_k);
                \\
                0, \hfill \tn{otherwise}.
            \end{cases}
    \]
    Clearly, $\sum_{n\in \w}\frac{h(n)}{f(n)}=\infty$ and $\sum_{n\in \w}\frac{h(n)}{g(n)}<\infty$.

    Let
    \[
        F=\{x\in\w^\w:\, (\existinfty n\in\w)(x\rest n\in H(n))\},
    \]
    according to Lemma \ref{lemat przekątniowy} with $|H(n)|=h(n)$. Notice that $F\in f\Nn(g)$. Let $F'\in f\Nn(f)$ with associated sequence $(T_n: \, n\in\w)$, $|T_n|=t(n)$ for every natural $n$. Then
    \[
        \sum_{n\in\w}\frac{t(n)}{f(n)}=s<\infty.
    \]
    We will show that the assumptions for Lemma \ref{lemat przekątniowy} are met. Let $j\in\w$ and set $N=n_k$ for $k\ge s$ and $n_k\ge j$. Then
    \[
        \sum_{n=1}^{N}t(n)\le s\cdot\max\{f(i):\, i\le N\} = s\cdot\max\{f(i):\, i\le n_k\}\le h(n_k)=h(N)\le \sum_{n=j}^{N}h(n).
    \]
    Hence $F\not\se F'$.
\end{proof}

    The above theorem implies the existence of chains of these $\sigma$-ideals of length $\c$.

\begin{corollary}
    There is a set $\{f_\alpha\in \w^\w:\,\alpha<\c\}$ such that $f\Nn(f_\alpha)\sen f\Nn(f_\beta)$ or $f\Nn(f_\beta)\sen f\Nn(f_\alpha)$ for $\alpha\ne\beta$.
\end{corollary}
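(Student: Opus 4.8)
The plan is to deduce the corollary directly from the preceding theorem by exhibiting a family of parameter functions of size $\c$ that is totally ordered by the relation ``$\lim_n f(n)/g(n)=0$''. Recall that the theorem shows: whenever $\lim_{n\to\infty} f(n)/g(n)=0$ one has the strict inclusion $f\Nn(f)\sen f\Nn(g)$. So it suffices to produce a family $\{f_\alpha:\,\alpha<\c\}\se\w^\w$ with each $f_\alpha$ satisfying $\limsup_n f_\alpha(n)=\infty$ (so that $f\Nn(f_\alpha)$ is defined) and such that for every pair $\alpha\ne\beta$ at least one of $\lim_n f_\alpha(n)/f_\beta(n)=0$ or $\lim_n f_\beta(n)/f_\alpha(n)=0$ holds. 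The theorem then upgrades each such comparability to a strict inclusion, and a totally ordered family of strict inclusions is precisely a chain.

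For the witnessing family I would use power functions indexed by the positive reals. Fix a bijection between $\c$ and $(0,\infty)$, and for a real $r>0$ set $f_r(n)=\lfloor n^r\rfloor$. Each $f_r$ lies in $\w^\w$ and satisfies $\lim_n f_r(n)=\infty$, so it is an admissible parameter. The key computation is that for $0<r<s$,
\[
    \frac{\lfloor n^r\rfloor}{\lfloor n^s\rfloor}\le \frac{n^r}{n^s-1}\to 0 \quad (n\to\infty),
\]
since $s>r>0$. Hence the family $\{f_r:\,r\in(0,\infty)\}$ is linearly ordered by little-$o$: for $r<s$ we have $\lim_n f_r(n)/f_s(n)=0$.

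Applying the theorem to each comparable pair yields, for $r<s$, the strict inclusion $f\Nn(f_r)\sen f\Nn(f_s)$. Relabelling $\{f_r:\,r\in(0,\infty)\}$ as $\{f_\alpha:\,\alpha<\c\}$ via the fixed bijection, we obtain a chain of length $\c$ under $\sen$, which is exactly the assertion of the corollary.

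There is essentially no hard step here: the entire content is carried by the previous theorem, and the remaining task is only to locate a continuum-sized chain under the little-$o$ ordering, for which the power functions are the simplest choice. The sole points to verify are the floor estimate above (routine) and that each $f_r$ is a legitimate parameter, i.e.\ tends to infinity so that $f\Nn(f_r)$ is defined; both are immediate. Any concern that the floor introduces non-monotonicity at small $n$ is harmless, since the inclusion theorem depends only on the limiting behaviour of the ratio.
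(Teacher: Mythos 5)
Your proposal is correct and is essentially identical to the paper's own proof, which also takes $f_\alpha(n)=\lfloor n^\alpha\rfloor$ for a continuum of real exponents (the paper uses $\alpha\in(1,\infty)$, you use $(0,\infty)$ --- an immaterial difference) and invokes the preceding theorem on each comparable pair. Your write-up just spells out the routine floor estimate that the paper leaves implicit.
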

\begin{proof}
    $f_\alpha(n)=\lfloor n^\alpha\rfloor$ for $\alpha\in (1,\infty)$.
\end{proof}
It turns out that there are also antichains of cardinality $\c$.
\begin{theorem}
    There are $f_\alpha\in \w^\w, \alpha < \c$, such that $f\Nn(f_\alpha)\not\se f\Nn(f_\beta)$ for $\alpha\ne\beta$.
\end{theorem}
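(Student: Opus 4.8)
The plan is to build the family from an almost disjoint family, and the first thing to notice is that a symmetric application of \Cref{poróżnianie 1} cannot work: if both $\sum_n f_\alpha(n)/f_\beta(n)<\infty$ and $\sum_n f_\beta(n)/f_\alpha(n)<\infty$ held, then $f_\alpha(n)/f_\beta(n)\to 0$ and its reciprocal $f_\beta(n)/f_\alpha(n)\to 0$ simultaneously, which is impossible. Hence the two non-inclusions $f\Nn(f_\alpha)\not\se f\Nn(f_\beta)$ and $f\Nn(f_\beta)\not\se f\Nn(f_\alpha)$ must each be witnessed by the finer sparse-spike mechanism of \Cref{lemat przekątniowy}, exactly as in the proof of the preceding theorem, and not by global ratio summability. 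So I would fix a fast increasing $T\in\w^\w$, say $T(n)=2^{n^2}$ (so $T(n)/T(n-1)=2^{2n-1}\ge 2^n$ and $T\ge 1$), together with an almost disjoint family $\{A_\alpha:\,\alpha<\c\}$ of infinite subsets of $\w$, and set $f_\alpha(n)=T(n)$ for $n\in A_\alpha$ and $f_\alpha(n)=1$ otherwise. Each $f_\alpha$ has $\limsup_n f_\alpha(n)=\infty$, so $f\Nn(f_\alpha)$ is a genuine ideal.

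The heart of the argument is the following separation claim: if $a,b\in\w^\w$ and there is an increasing sequence $(n_k)_{k\in\w}$ with $\sum_k k\cdot\max\{b(i):\,i\le n_k\}/a(n_k)<\infty$, then $f\Nn(a)\not\se f\Nn(b)$. To prove it I would mirror the preceding theorem: put $h(n_k)=k\cdot\max\{b(i):\,i\le n_k\}$ and $h(m)=0$ for $m\notin\{n_k:\,k\in\w\}$, so that $\sum_n h(n)/a(n)<\infty$ and the diagonal set $F=\{x:\,(\existinfty n)(x\rest n\in H(n))\}$ with $|H(n)|=h(n)$ lies in $f\Nn(a)$. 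For an arbitrary $F'\in f\Nn(b)$ with sizes $t(n)=|T_n|$ and $\sigma=\sum_n t(n)/b(n)<\infty$, one has $\sum_{n=1}^{N}t(n)\le\sigma\cdot\max\{b(i):\,i\le N\}$; taking $N=n_k$ with $k>\sigma$ gives $\sum_{n=1}^{n_k}t(n)<h(n_k)\le\sum_{i=j}^{n_k}h(i)$ for every $j\le n_k$, which is precisely the hypothesis of \Cref{lemat przekątniowy}. Hence $F\not\se F'$, and since $F'$ was an arbitrary basic $f\Nn(b)$-set, $F\notin f\Nn(b)$.

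It then remains to verify the hypothesis of the claim for each ordered pair. Fixing $\alpha\ne\beta$ and letting $(n_k)$ enumerate $A_\alpha\bez A_\beta$ — infinite because the family is almost disjoint — we get $a(n_k)=f_\alpha(n_k)=T(n_k)$ since $n_k\in A_\alpha$, while $n_k\notin A_\beta$ forces every $i\le n_k$ with $f_\beta(i)=T(i)>1$ to satisfy $i\le n_k-1$, so that $\max\{f_\beta(i):\,i\le n_k\}\le T(n_k-1)$. Consequently $k\cdot\max\{f_\beta(i):\,i\le n_k\}/f_\alpha(n_k)\le k\,T(n_k-1)/T(n_k)\le k\,2^{-n_k}\le k\,2^{-k}$, which is summable (the $k=0$ term vanishes, so the use of $T(n_k-1)$ is harmless). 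The claim therefore yields $f\Nn(f_\alpha)\not\se f\Nn(f_\beta)$, and running the identical argument along $A_\beta\bez A_\alpha$ yields $f\Nn(f_\beta)\not\se f\Nn(f_\alpha)$. As an almost disjoint family of size $\c$ exists in ZFC, $\{f_\alpha:\,\alpha<\c\}$ is the desired antichain.

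The main obstacle is conceptual rather than computational: the impossibility in the first paragraph means the two non-inclusions cannot both arise from summable ratios, so the whole construction must instead arrange, for every pair, two sparse index sequences along which one function out-spikes the running maximum of the other so violently that the spike stays summable against the larger function. The almost disjoint family is what supplies continuum-many functions crossing each other in both directions, while the super-exponential growth of $T$ — combined with the observation that $n_k\notin A_\beta$ makes the competitor's running maximum lag one coordinate behind — is what makes the summability go through. A secondary technical point, handled exactly as in the preceding theorem, is that a single diagonal set $F$ must defeat every $f\Nn(b)$-set at once; this follows because the required instance of the hypothesis of \Cref{lemat przekątniowy} holds uniformly as soon as $k>\sigma$.
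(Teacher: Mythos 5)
Your construction shares the paper's skeleton exactly — an almost disjoint family $\{A_\alpha:\,\alpha<\c\}$ of size $\c$ and functions equal to $1$ off $A_\alpha$ and enormous on it, with each non-inclusion witnessed along the infinite set $A_\alpha\bez A_\beta$ via \Cref{lemat przekątniowy} — but the two proofs invoke that lemma differently, and the difference matters. The paper fixes, for each ordered pair $(\alpha,\beta)$, a \emph{single} pair of test functions: $s(n)=(3n-2)!$ for $n\in A_\alpha$ and $0$ otherwise, $t(n)=(3n)!$ for $n\in A_\beta$ and $0$ otherwise. Its hypothesis check is your one-step-lag observation in disguise (for $n\in A_\alpha\bez A_\beta$ all of $t$'s mass sits at coordinates $\le n-1$, so $\sum_{i\le n}t(i)\le(n-1)(3n-3)!<(3n-2)!=s(n)$), and every basic set of $f\Nn(f_\beta)$ has sizes eventually bounded by $t$, since sizes are eventually below $f_\beta$, which off $A_\beta$ forces them to vanish. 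This is a literal application of the lemma: the set $F$ built there depends on both $s$ \emph{and} $t$ (the blocks $a^n_i$ are chosen against $t$), so fixing $t$ once is precisely what makes one $F$ defeat all competitors. Your route instead abstracts the proof of the chain theorem ($\lim_n f(n)/g(n)=0$ implies $f\Nn(f)\sen f\Nn(g)$) into a per-competitor criterion in which the constant $\sigma$ varies; like that proof, it uses the lemma in a uniform way its statement does not literally provide, since your diagonal set must be fixed before $\sigma$ is known. Your closing remark names the right reason this is harmless (the factor $k$ in the spike heights eventually beats any fixed $\sigma$), but making it precise requires reopening the lemma's proof and choosing the blocks independently of $t$, e.g.\ blocks ending at consecutive spikes. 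Alternatively, your construction closes off in the paper's style with no change to your functions: take $s(n)=n\cdot 2^{(n-1)^2}$ on $A_\alpha$ and $t(n)=2^{n^2}$ on $A_\beta$ (zero elsewhere); then $\sum_n s(n)/f_\alpha(n)<\infty$, the lemma's hypothesis holds because $\sum_{i\le n}t(i)\le 2\cdot 2^{(n-1)^2}<s(n)$ for $n\in A_\alpha\bez A_\beta$, $n\ge 3$, and all competitors' size sequences are eventually $\le t$. In short: your argument is sound at the paper's own level of rigor (the paper takes the same liberty in the chain theorem), and what it buys is a reusable separation criterion; what the paper's version buys is a proof that stays within the stated form of \Cref{lemat przekątniowy}.
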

\begin{proof}
    Let $\{A_\alpha:\, \alpha<\c\}\se P(\w)$ be a m.a.d. family. For each $\alpha<\c$ set
    \[
        f_\alpha(n)=\begin{cases}
	               1, &\tn{ if $n\notin A_{\alpha}$};
	               \\
	               (3n)!, &\tn{ if $n\in A_{\alpha}$}.
	              \end{cases}
    \]
    Fix $\alpha\ne\beta$. Let
    \[
        s(n)=\begin{cases}
	               0, &\tn{ if $n\notin A_{\alpha}$},
	               \\
	               (3n-2)!, &\tn{ if $n\in A_{\alpha}$},
	              \end{cases}
    \]
    and
    \[
        t(n)=\begin{cases}
	               0, &\tn{ if $n\notin A_{\beta}$},
	               \\
	               (3n)!, &\tn{ if $n\in A_{\beta}$}.
	              \end{cases}
    \]
    Clearly $\sum_{n=1}^{\infty}\frac{s(n)}{f_{\alpha}(n)}<\infty$.
    Functions $s$ and $t$ satisfy the assumptions of Lemma \ref{lemat przekątniowy}. Indeed, if $n\in A_\alpha\bez A_\beta$, then 
    \[
        \sum_{i=1}^n t(i)=\sum_{i=1}^{n-1} t(i)\le \sum_{i=1}^{n-1} (3i)!<(3n-3)!\cdot (n-1)<(3n-2)!=s(n).
    \]
    Let $F$ associated with $S_n, n>0,$ as in the thesis of Lemma \ref{lemat przekątniowy}. Clearly, $F\in f\Nn(f_{\alpha})$. Let $F'\in f\Nn(f_{\beta})$ associated with $T_n, n>0,$ i.e. $\sum_{n=1}^\infty\frac{|T_n|}{f_{\beta}(n)}<\infty$. Then there is $N\in\w$ such that $|T_n|\le t(n)$ for $n>N$. By Lemma \ref{lemat przekątniowy} $F\not\se F'$.
\end{proof}

\subsection*{$f\Nn(h)$ versus $f\Ss(h)$} \hfill\\
In this part we will investigate the relationship between $f\Nn(h)$ and $f\Ss(h)$.
Recall that in the Cantor space $\Ss\sen\Nn$ and every null set is a union of two small sets (e.g. \cite{BarSheSmall}).

\begin{proposition}\label{fake null w fake small}
    For every $h\in \w^\w$, $\limsup_{n\to\infty}{h(n)}=\infty$, there exists $h'\in\w^\w$ such that $f\Nn(h)\se f\Ss(h')$.
\end{proposition}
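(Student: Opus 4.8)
The plan is to invoke the combinatorial characterisation lemma proved above, so that a given $F\in f\Nn(h)$ is contained in $\{x\in\baire:\,(\existinfty n)(x\rest n\in S_n)\}$ for some $S_n\se\w^n$ with $\sum_{n}\frac{|S_n|}{h(n)}<\infty$, and then to repackage this prefix-based description into the interval-based form demanded by $f\Ss(h')$. The subtlety is that a \emph{single} $h'$, depending only on $h$, must serve every $F\in f\Nn(h)$ at once, although the partition and the patterns may depend on $F$. I will in fact fix the partition in advance as well. Choose intervals $I_k=[m_k,m_{k+1})$ with strictly increasing lengths $\ell_k=m_{k+1}-m_k$, for concreteness $m_k=\binom{k+1}{2}$, so that $\ell_k=k+1$ and each length is attained exactly once. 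Given $F$ and a witnessing sequence $(S_n)$, declare a pattern on $I_k$ admissible exactly when it is the projection to $I_k$ of some prefix in an $S_n$ whose last complete block is $I_k$; that is, set $J_k=\{\sigma\rest I_k:\,\sigma\in S_n,\ m_{k+1}\le n<m_{k+2}\}$.

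For the containment, take $x\in F$, so $x\rest n\in S_n$ for infinitely many $n$. For each such $n\ge m_1$ let $k$ be the unique index with $m_{k+1}\le n<m_{k+2}$; then $I_k\se[0,n)$, hence $x\rest I_k=(x\rest n)\rest I_k\in J_k$. As the witnessing $n$ tend to infinity they meet infinitely many distinct blocks, so $x\rest I_k\in J_k$ for infinitely many $k$. This proves $F\se\{y\in\baire:\,(\existinfty k)(y\rest I_k\in J_k)\}$.

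It remains to choose $h'$ and check the series condition. Put $\bar h(n)=1+\max\{h(i):\,i\le n\}$, which is non-decreasing and, since $\limsup_n h(n)=\infty$, tends to infinity. Define $h'$ on the (distinct) lengths by $h'(\ell_k)=\bar h(m_{k+2})$, and arbitrarily on the remaining arguments; then $\limsup_n h'(n)=\infty$. From $|J_k|\le\sum_{m_{k+1}\le n<m_{k+2}}|S_n|$ and the bound $\bar h(m_{k+2})\ge h(n)$ for every $n<m_{k+2}$ one gets $\frac{|J_k|}{h'(\ell_k)}\le\sum_{m_{k+1}\le n<m_{k+2}}\frac{|S_n|}{h(n)}$, and summing over $k$ telescopes the disjoint ranges of $n$ into $\sum_{n\ge m_1}\frac{|S_n|}{h(n)}\le\sum_n\frac{|S_n|}{h(n)}<\infty$. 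Thus $(I_k),(J_k)$ witness $F\in f\Ss(h')$.

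The only genuinely delicate point is the uniformity of $h'$: the containment transfers automatically because prefix membership at level $n$ projects onto block membership at the last complete block, but one must dominate the possibly non-monotone $h$ over an entire block of indices $n$ by a single value $h'(\ell_k)$. Fixing the partition beforehand and defining $h'(\ell_k)$ through the running maximum $\bar h(m_{k+2})$ decouples $h'$ from the particular witness $(S_n)$, which is exactly what makes one $h'$ work for the whole ideal.
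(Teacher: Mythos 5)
Your proof is correct and is essentially the paper's own argument: the same partition into consecutive intervals of strictly increasing length $\ell_k=k+1$, the same definition of $J_k$ as projections onto $I_k$ of the prefixes in $S_n$ for $n$ ranging over the next block, and the same telescoping of $\sum_k |J_k|/h'(\ell_k)$ into $\sum_n |S_n|/h(n)$. The only (immaterial) difference is your choice of $h'$: the paper takes the block maximum $h'(n+1)=\max\{h(k):\,k\in I_{n+1}\}$, while you take the running maximum $1+\max\{h(i):\,i\le m_{k+2}\}$; both dominate $h$ on the relevant block, which is all that is needed.
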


\begin{proof}
    For every $n\in \w$ set $I_{n}=[\frac{n(n+1)}{2},\frac{(n+1)(n+2)}{2})$, i.e. consecutive intervals with lengths increasing by $1$. Set
    \[
        h'(n+1)=\max\{h(k):\, k\in I_{n+1}\}.
    \]
    Take any $F=\{x\in \w^\w:\, (\existinfty n)(x\rest n\in S_n)\}\in f\Nn(h)$. Let
    \[
        J_n=\bigcup_{k\in I_{n+1}}\{\sigma\rest I_n:\, \sigma\in S_k\}.
    \]
    Then
    \[
        |J_n|\le \sum_{k\in I_{n+1}}|S_k|=\sum^{\frac{(n+2)(n+3)}{2}-1}_{k=\frac{(n+1)(n+2)}{2}}|S_k|.
    \]
    Notice that
    \[
        \frac{|J_n|}{h'(n+1)}\le \sum_{k\in I_{n+1}}\frac{|S_k|}{h(k)}.
    \]
    Hence, $\sum_{n\in\w}\frac{|J_n|}{h'(n+1)}\le \sum_{n\in\w}\frac{|S_n|}{h(n)}<\infty$. Therefore, $F'=\{x\in\w^\w:\, (\existinfty n)(x\rest I_n\in J_n)\}$ belongs to $f\Ss(h')$. Since $F\se F'$, $F\in f\Ss(h').$
\end{proof}

\begin{remark}
    There is $h\in\w^\w$ for which $f\Nn(h)\se f\Ss(h)$.
\end{remark}
\begin{proof}
    Take $h(n)=\log n$. Then 
    $$h'(n)=\max\{h(k):\, k\in I_{n}\}\le \log\frac{(n+1)(n+2)}{2}\le 2\log n$$
    for $n\ge 2$.
\end{proof}

\begin{remark}
    $f\Nn(2^n)\not\se f\Ss(2^n)$.
\end{remark}
\begin{proof}
    Take a null set that is not small from $2^\w$ and see it in $\w^\w$.
\end{proof}

\begin{proposition}\label{fake null pasujący do fake smalla}
    For every $h\in \w^\w$, $\limsup_{n\to\infty}{h(n)}=\infty$, there exists $h'\in\w^\w$, $\limsup_nh'(n)=\infty$ such that $f\Nn(h')\se f\Ss(h)$.
\end{proposition}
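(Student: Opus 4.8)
The plan is to mirror Proposition~\ref{fake null w fake small} but run the construction in reverse: instead of adapting the intervals to a fixed $h$, I will choose $h'$ to vanish (or be tiny) away from a sparse set of indices aligned with the right-hand endpoints of a sequence of blocks, whose lengths are tuned so that $h$ is large on them. This forces any $f\Nn(h')$-witness to be supported only at those sparse levels, after which restricting the length-$n_k$ sequences to the $k$-th block is essentially lossless.

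Concretely, since $\limsup_n h(n)=\infty$, I first fix positive integers $d_1<d_2<\dots$ with $h(d_k)\ge k$ for every $k$. Put $n_0=0$, $n_k=n_{k-1}+d_k$, and let $I_k=[n_{k-1},n_k)$, so that $(I_k)_{k\ge 1}$ partitions $\w$ into intervals with $|I_k|=d_k$. Define $h'\in\w^\w$ by $h'(n_k)=h(d_k)$ and $h'(m)=0$ for $m\notin\{n_k:\,k\ge 1\}$ (if one prefers to avoid the value $0$, set $h'(m)=1$ there; the argument is the same up to discarding finitely many nonempty $S_m$). Since $h'(n_k)=h(d_k)\ge k\to\infty$, we get $\limsup_n h'(n)=\infty$, as required.

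Now take $F\in f\Nn(h')$ with a witness $(S_n)$, $S_n\se\w^n$, satisfying $\sum_n\frac{|S_n|}{h'(n)}<\infty$ and $F\se\{x\in\baire:\,(\existinfty n)(x\rest n\in S_n)\}$. The choice of $h'$ forces $S_m=\0$ for $m\notin\{n_k\}$ (respectively, for all but finitely many such $m$), so that only the levels $n_k$ carry mass and $F\se\{x:\,(\existinfty k)(x\rest n_k\in S_{n_k})\}$. I then set $J_k=\{\sigma\rest I_k:\,\sigma\in S_{n_k}\}\se\w^{I_k}$, whence $|J_k|\le|S_{n_k}|$, and since $I_k\se[0,n_k)$, membership $x\rest n_k\in S_{n_k}$ immediately yields $x\rest I_k\in J_k$. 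Thus $F\se\{x:\,(\existinfty k)(x\rest I_k\in J_k)\}$ and
\[
    \sum_k\frac{|J_k|}{h(|I_k|)}\le\sum_k\frac{|S_{n_k}|}{h(d_k)}=\sum_k\frac{|S_{n_k}|}{h'(n_k)}\le\sum_n\frac{|S_n|}{h'(n)}<\infty,
\]
so $F\in f\Ss(h)$.

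The only genuine obstacle is the structural mismatch between the two families: $f\Nn$ constrains \emph{initial segments} $x\rest n$, while $f\Ss$ constrains disjoint \emph{blocks} $x\rest I_k$. This is precisely what the sparse support of $h'$ is designed to circumvent, by killing every level except the block endpoints $n_k$, the only surviving constraints are initial segments of length $n_k$, and restricting such a segment to its last block $I_k$ only decreases the pattern count, keeping the summability bound intact. The remaining points, exploiting $\limsup_n h(n)=\infty$ to select the $d_k$ and verifying $\limsup_n h'(n)=\infty$, are routine.
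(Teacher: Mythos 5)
Your proof is correct (in the $h'\equiv 1$ variant) and it shares with the paper's proof the key projection trick: restrict a length-$n$ witness set to a block $I\se[0,n)$ whose length is matched to $n$ so that $h(|I|)$ equals $h'(n)$, which lets the summability bound transfer from $f\Nn(h')$ to $f\Ss(h)$. The two arguments engineer $h'$ differently, though. The paper takes the fixed intervals $I_n=[\tfrac{n(n+1)}{2},\tfrac{(n+1)(n+2)}{2})$ of length $n+1$ and defines $h'$ to be block-constant, $h'(k)=h(n)$ for $k\in I_n$; a witness $(S_k)$ may then be supported at \emph{every} level, and one aggregates $J_n=\bigcup_{k\in I_{n+1}}\{\sigma\rest I_n:\,\sigma\in S_k\}$, the point being that $h'(k)=h(n+1)=h(|I_n|)$ for all $k\in I_{n+1}$. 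You instead adapt the block lengths to $h$ (choosing $d_k$ with $h(d_k)\ge k$, which is also how you secure $\limsup_n h'(n)=\infty$) and make $h'$ trivial off the endpoints $n_k$, so that summability forces any witness to vanish at all but finitely many non-endpoint levels; the projection then becomes level-by-level, with no aggregation over a block. What the paper's version buys is that $h'$ is a genuine rearrangement of $h$ (no degenerate values) and the argument needs no step about discarded levels; what yours buys is lighter bookkeeping and a clear display of how much freedom one has in choosing $h'$. One caveat: you should commit to the $h'(m)=1$ option rather than $h'(m)=0$, since the definition of $f\Nn(h')$ divides by $h'$ and terms $\frac{|S_m|}{0}$ (in particular $0/0$) are undefined without an ad hoc convention; with $h'\equiv 1$ off $\{n_k\}$, the finitely many nonempty $S_m$ at non-endpoint levels can indeed be discarded without affecting the "infinitely often" condition, exactly as you say.
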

\begin{proof}
    The proof is largely similar to the one of \Cref{fake null w fake small}, but it is difficult to avoid some repetition without compromising the clarity of the exposition.
    \\
    For every $n\in \w$ set $I_{n}=[\frac{n(n+1)}{2},\frac{(n+1)(n+2)}{2})$. Set $h'(k)=h(n)$ for $k\in I_n$. Clearly, $\limsup_nh'(n)=\infty$.
    \\
    Take any $F=\{x\in \w^\w:\, (\existinfty n)(x\rest n\in S_n)\}\in f\Nn(h')$. Let
    \[
        J_n=\bigcup_{k\in I_{n+1}}\{\sigma\rest I_n:\, \sigma\in S_k\}.
    \]
    Then
    \[
        |J_n|\le \sum_{k\in I_{n+1}}|S_k|=\sum^{\frac{(n+2)(n+3)}{2}-1}_{k=\frac{(n+1)(n+2)}{2}}|S_k|.
    \]
    Notice that
    \[
        \frac{|J_n|}{h(n+1)}\le \sum_{k\in I_{n+1}}\frac{|S_k|}{h(n+1)}=\sum_{k\in I_{n+1}}\frac{|S_k|}{h'(k)}.
    \]
    Hence, $\sum_{n\in\w}\frac{|J_n|}{h(n+1)}\le \sum_{n\in\w}\frac{|S_n|}{h'(n)}<\infty$. Therefore, $F'=\{x\in\w^\w:\, (\existinfty n)(x\rest I_n\in J_n)\}$ belongs to $f\Ss(h)$. Since $F\se F'$, $F\in f\Ss(h).$
\end{proof}

\begin{proposition}
    $f\Ss(h)\not\se f\Nn(\Fin)$ for every $h\in\baire$, $\limsup_n h(n)=\infty$.
\end{proposition}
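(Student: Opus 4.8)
The plan is to exhibit a single set $A$ that lies in $f\Ss(h)$ yet escapes every $f\Nn(\Fin)$ cover. Since $\limsup_n h(n)=\infty$, for each $n$ I can pick a length $l_n$ with $h(l_n)\ge 2^n$; partitioning $\w$ into consecutive intervals $I_n$ with $|I_n|=l_n$ and setting $J_n=\{\mbb{0}\rest I_n\}$, I take
\[
    A=\{x\in\w^\w:\,(\existinfty n)(x\rest I_n=\mbb{0}\rest I_n)\}.
\]
Then $\sum_n \frac{|J_n|}{h(|I_n|)}=\sum_n\frac{1}{h(l_n)}\le\sum_n 2^{-n}<\infty$, so $A\in f\Ss(h)$ directly from the definition. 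The essential feature is that here $A$ is \emph{exactly} its own defining representation set, which is what will make the diagonalization below land inside $A$.

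The core of the argument is to show $A\notin f\Nn(\Fin)$. Suppose toward a contradiction that $A\se\{x:\,(\existinfty m)(x\rest m\in S_m)\}$ for some sequence of finite sets $S_m\se\w^m$. I would build $y\in A$ with $y\rest m\notin S_m$ for all large $m$. The construction matches $A$ on the even intervals and uses the odd intervals as a reservoir of free coordinates: put $y\rest I_{2k}=\mbb{0}\rest I_{2k}$ for every $k$ (so $y$ hits $J_n$ infinitely often, hence $y\in A$), and on the odd intervals place large escape values. Writing the coordinates belonging to odd intervals as $i_0<i_1<\dots$, I would set $y(i_l)=1+\max\{\sigma(i_l):\,\sigma\in S_m,\ i_l<m\le i_{l+1}\}$.

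The point is that two consecutive free coordinates are separated by at most a single even interval, so each gap $(i_l,i_{l+1}]$ is finite; hence the maximum defining $y(i_l)$ is taken over finitely many $m$ and finitely many $\sigma$, and is a genuine natural number. For any $m>i_0$ let $i_l$ be the largest free coordinate below $m$, so $m\le i_{l+1}$ and $m\in(i_l,i_{l+1}]$; then $y(i_l)>\sigma(i_l)$ for every $\sigma\in S_m$, whence $y\rest m\ne\sigma$ and $y\rest m\notin S_m$. Thus $y\rest m\in S_m$ for at most finitely many $m$, so $y\notin\{x:\,(\existinfty m)(x\rest m\in S_m)\}$, contradicting $y\in A$ together with the assumed inclusion.

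The main obstacle — and the reason a naive attempt fails — is that $f\Ss(h)$-membership only requires $A$ to be \emph{contained} in its representation set, so diagonalizing against that representation set need not yield a point of $A$. Taking $A$ to equal the full representation set removes this gap, but then one must simultaneously keep infinitely many interval matches alive (to stay inside $A$) and retain a cofinal supply of genuinely free coordinates (to defeat each finite $S_m$). Splitting the intervals into the matched even ones and the free odd ones secures both at once, and the finiteness of the gaps between free coordinates is precisely what allows the escape values to out-grow every finite $S_m$.
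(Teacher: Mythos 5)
Your proof is correct and follows essentially the same strategy as the paper's: build a set in $f\Ss(h)$ that pins down zeros on infinitely many intervals while leaving infinitely many coordinates completely unconstrained, then diagonalize against any candidate sequence of finite sets $S_m$ by placing, at each free coordinate, a value exceeding every relevant $\sigma$ value. The only cosmetic difference is that the paper reserves interleaved singleton intervals (with empty $J$'s) as the free coordinates, whereas you use the odd intervals of the representation set itself; the diagonalization is the same.
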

\begin{proof}
    Since $\limsup_n h(n)=\infty$, there is an increasing sequence $(k_n)_{n\in\w}$ of natural numbers such that
    $$\sum_{n\in\w}\frac{1}{h(k_n)}<\infty.$$
    Take a partition of $\w$ into intervals $(I_n)_{n\in\w}$ such that $|I_{2n}|=1$ and $|I_{2n+1}|=k_n$ for every $n\in\w$. Let $J_{2n}=\emptyset$ and $J_{2n+1}=\{0\}^{I_{2n+1}}$. Clearly,
    $$A=\left\{x\in\baire:\,(\forall n\in\w)(x\rest I_{2n+1}\in J_{2n+1})\right\}\in f\Ss(h).$$

    Take any $(S_k)_{k\in\w}$, $S_k\se\w^k$, $|S_k|<\w$. We will inductively construct $y\in A$ such that
    $$(\forall k\in\w)(y\rest k\notin S_k).$$
    First, take $y(0)=\max\{t(0):\,t\in S_j\land j\in I_0\cup I_1\}+1$ and $y(i)=0$ for every $i\in I_1$. Suppose that we have already defined $y\rest(I_0\cup I_1\cup\ldots\cup I_{2n-1})$. $I_{2n}=\{a\}$, so put $y(a)=\max\{t(a):\,t\in S_j\land j\in I_{2n}\cup I_{2n+1}\}+1$ and $y(i)=0$ for every $i\in I_{2n+1}$.
\end{proof}

    In $2^\w$ every null set is a union of two small sets (see \cite[Theorem 2.5.7, Theorem 2.5.11]{BarJu}). One may wonder is it the case in the Baire space.
    \begin{problem}
        Let $A\in f\Nn(h)$. Are there sets $S_0, S_1\in f\Ss(h)$ such that $A\se S_0\cup S_1$?
    \end{problem}

\subsection*{Orthogonality to $\Mm_-$}\hfill\\
We already proved in \Cref{fake null ortogonalny do M} that $f\Nn(h)\perp\Mm$ for every $h\in\baire$, $\limsup_nh(n)=\infty$. From \Cref{fake null pasujący do fake smalla} it follows that the same holds for $f\Ss(h)$.  Let us examine what happens in the case of $\Mm_-$.
\begin{theorem}
    $f\Ss(h)\perp \Mm_-$ for every $h\in\baire$, $\limsup_n h(n)=\infty$.
\end{theorem}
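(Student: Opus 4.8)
The plan is to prove orthogonality in its strongest form by exhibiting a single set $A\in f\Ss(h)$ whose complement lies in $\Mm_-$; then $\baire=A\cup(\baire\setminus A)$ is the decomposition witnessing $f\Ss(h)\perp\Mm_-$, exactly in the style of the proof of \Cref{fake null ortogonalny do M}. The key observation is that the two families are defined by logically complementary block conditions: a basic $f\Ss(h)$ set is cut out by an ``infinitely often hit'' clause $(\existinfty n)(x\rest I_n\in J_n)$, while a basic $\Mm_-$ set is cut out by an ``eventually miss'' clause $(\foralmostall n)(x\rest I_n\ne x_F\rest I_n)$. Choosing a single allowed pattern per block turns these clauses into exact negations of one another over a common partition.

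First I would fix the partition. Since $\limsup_n h(n)=\infty$, for each $n$ the set $\{m\in\w:\, h(m)\ge 2^{n+1}\}$ is infinite, so I may pick lengths $\ell_n$ with $h(\ell_n)\ge 2^{n+1}$ and let $(I_n)_{n\in\w}$ be the consecutive intervals with $|I_n|=\ell_n$, which tile $\w$. With a single pattern per block this yields summable weights: $\sum_{n\in\w}\frac{1}{h(|I_n|)}\le\sum_{n\in\w}2^{-(n+1)}<\infty$.

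Next, take $x_F=\mbb{0}$ and $J_n=\{\mbb{0}\rest I_n\}$, so that $|J_n|=1$ and $\sum_{n\in\w}\frac{|J_n|}{h(|I_n|)}<\infty$. Set $A=\{x\in\baire:\, (\existinfty n)(x\rest I_n=\mbb{0}\rest I_n)\}$, which is a basic $f\Ss(h)$ set. Its complement is precisely $\{x\in\baire:\, (\foralmostall n)(x\rest I_n\ne\mbb{0}\rest I_n)\}$, which is the basic $\Mm_-$ set determined by the pattern $\mbb{0}$ and the same partition $(I_n)_{n\in\w}$. Hence $\baire\setminus A\in\Mm_-$, and the required decomposition follows.

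I do not expect a real obstacle here: once the complementary structure is noticed the verification is immediate. The only step that genuinely uses the hypothesis is the construction of the partition, where $\limsup_n h(n)=\infty$ is exactly what guarantees infinitely many admissible block lengths on which $h$ is large enough to make the single-pattern weights summable.
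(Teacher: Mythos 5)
Your proposal is correct and is essentially the paper's own proof: both choose block lengths along which $h$ is large enough to make $\sum_{n}1/h(|I_n|)$ converge, take the singleton pattern sets $J_n=\{\mbb{0}\rest I_n\}$, and observe that the complement of the resulting basic $f\Ss(h)$ set is exactly the basic $\Mm_-$ set given by the pattern $\mbb{0}$ over the same partition. The only cosmetic difference is that you make the summability explicit via $h(\ell_n)\ge 2^{n+1}$, whereas the paper just extracts a subsequence $(k_n)$ with $\sum_n 1/h(k_n)<\infty$.
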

\begin{proof}
    Since $\limsup_n h(n)=\infty$, there is an increasing sequence $(k_n)_{n\in\w}$ of natural numbers such that
    $$\sum_{n\in\w}\frac{1}{h(k_n)}<\infty.$$
    Take a partition of $\w$ into intervals $(I_n)_{n\in\w}$ such that $|I_n|=k_n$. Set $J_n=\{\mbb{0}\rest I_n\}$.
    Clearly
    \[
    A=\{x\in\baire :\ \exists^\infty n\; x\rest I_n\in J_n\}\in f\Ss(h).
    \]
    Moreover,
    \[
    \baire\bez A=\{x\in\baire :\ \forall^\infty n\; x\rest I_n\neq \mbb{0}\rest I_n\}\in \Mm_-.\qedhere
    \]
\end{proof}

\begin{theorem}
    $f\Nn(Fin)\not\perp \Mm_-$.
\end{theorem}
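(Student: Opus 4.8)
The statement asserts that $f\Nn(\Fin)$ and $\Mm_-$ fail to be orthogonal; unwinding the definition of $\perp$, this means precisely that for every $A\in f\Nn(\Fin)$ and every $B\in\Mm_-$ one has $A\cup B\ne\baire$. Because any point lying outside the canonical generators containing $A$ and $B$ automatically lies outside $A$ and $B$ themselves, I would fix a sequence $(S_n)_{n\in\w}$ with each $S_n\se\w^n$ finite and $A\se\{x:\,(\existinfty n)(x\rest n\in S_n)\}$, together with a pattern $x_B\in\baire$ and an interval partition $(I_n)_{n\in\w}$ of $\w$ with $B\se\{x:\,(\foralmostall n)(x\rest I_n\ne x_B\rest I_n)\}$. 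The goal is then to produce a single $x\in\baire$ satisfying
\[
(\foralmostall n)(x\rest n\notin S_n)\qquad\text{and}\qquad(\existinfty n)(x\rest I_n=x_B\rest I_n),
\]
the first clause forcing $x\notin A$ and the second forcing $x\notin B$.

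The plan is to construct $x$ by a block induction that alternates between a single ``guard'' coordinate used to escape $A$ and a whole interval $I_m$ copied from $x_B$ used to escape $B$. I would build an increasing sequence of lengths $\ell_0=0<\ell_1<\cdots$, defining $x\rest\ell_{k+1}$ at stage $k$. First pick an interval $I_{m_k}$ of the partition with $\min I_{m_k}>\ell_k$ (possible since the intervals exhaust $\w$), and set $\ell_{k+1}=\max I_{m_k}+1$. On the block $[\ell_k,\ell_{k+1})$ I would place a guard value at the coordinate $\ell_k$, fill the gap $(\ell_k,\min I_{m_k})$ with zeros, and copy $x\rest I_{m_k}=x_B\rest I_{m_k}$. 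Taking $m_k$ strictly increasing makes the matched intervals infinite in number, which yields the second clause.

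The crux — and the place where finiteness of the $S_n$, i.e. $\Fin$ rather than a general $h$, is indispensable — is the choice of the guard value at $\ell_k$. The set $\bigcup_{\ell_k<n\le\ell_{k+1}}S_n$ is a finite union of finite sets, hence finite, so $\{\sigma(\ell_k):\,\sigma\in S_n,\ \ell_k<n\le\ell_{k+1}\}$ is finite; I would set $x(\ell_k)$ to exceed all of them. Then for every $n\in(\ell_k,\ell_{k+1}]$ and every $\sigma\in S_n$ the coordinate $\ell_k$ lies in $\dom\sigma$ and $x(\ell_k)\ne\sigma(\ell_k)$, so $x\rest n\ne\sigma$ and hence $x\rest n\notin S_n$. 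Since the windows $(\ell_k,\ell_{k+1}]$ partition $\{1,2,\dots\}$ and the guard coordinate $\ell_k$ always lies strictly to the left of the matched interval $I_{m_k}$, so the two roles never clash, this gives $x\rest n\notin S_n$ for all $n\ge1$ and completes the first clause. The only genuine obstacle is precisely the tension between the forced values on the copied intervals and the prefix condition defining $A$; finiteness resolves it because one sufficiently large guard coordinate, placed before each block, simultaneously kills every membership $x\rest n\in S_n$ across the whole finite window of $n$'s covering that block. This is exactly the feature absent for a general $h$, consistent with the orthogonality results obtained above for $f\Ss(h)$ and $f\Nn(h)$.
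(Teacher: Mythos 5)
Your proof is correct and takes essentially the same approach as the paper's: both construct a single point that copies the $\Mm_-$-pattern on infinitely many intervals of the partition while exploiting finiteness of the sets $S_n$ to insert ``max plus one'' guard values that rule out $x\rest n\in S_n$ for every $n$ in the relevant window. The only difference is bookkeeping --- the paper alternates the intervals of the given partition and guards at every coordinate of the skipped intervals, while you use one guard coordinate per block --- which does not change the substance of the argument.
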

\begin{proof}
    We will show that for any $F\in\Mm_-$ and $A\in f\Nn(Fin)$,  $F^c\not\se A$. Without loss of generality let $x_F=\mathbb{0}$ and $(I_n)_{n\in\w}$ be a partition of $\w$ into intervals associated with $F$. Let $S_n\se \w^n$, $|S_n|<\w$, be associated with $A$. We will define $y\in F^c\bez A$.
    \\
    Denote $I_{2m+1}=[a_m+1,b_m]$. Set
    \[
        y(k)=
            \begin{cases}
                0, \hfill \tn{ if } a_m<k\le b_m;
                \\
                \max\{\sigma(k):\, \sigma\in S_{k+1}\}+1, \hfill \tn{ \;\;if } b_{m-1}+1\le k < a_m;
                \\
                \max\{\sigma(k):\, \sigma\in\bigcup_{i=a_m+1}^{b_m+1}S_i\}+1, \hfill \tn{ if } k=a_m.
            \end{cases}
    \]
    Notice that $y\rest I_{2m+1}=x_F\rest I_{2m+1}$ for every $m\in\w$, hence $y\in F^c$. On the other hand $(\forall n>0)(y\rest n \notin S_n)$, therefore $y\notin A$.
\end{proof}

\printbibliography

\end{document}